\newtheorem{lemma}{Lemma}
\newtheorem{theorem}{Theorem}
\newtheorem{corollary}{Corollary}
\newtheorem{remark}{Remark}
\numberwithin{equation}{section}
\DeclareMathOperator*{\Sup}{\underset{x,t > 0}{Sup}}
\begin{document}

\leftline{ \scriptsize \it }

\title[Jain Operators]{On Sz\'{a}sz-Mirakyan-Jain Operators preserving exponential functions}
\maketitle

\begin{center}
{\bf G. C. Greubel} \\
Newport News, VA, United States \\
jthomae@gmail.com
\end{center}

\vspace{10mm}

\noindent \textbf{Abstract.} In the present article we define the Jain type modification of the generalized
Sz{\'a}sz-Mirakjan operators that preserve constant and exponential mappings. Moments, recurrence formulas, and other
identities are established for these operators. Approximation properties are also obtained with use of the
Boham-Korovkin theorem.

\vspace{2mm}
\noindent \textbf{Keywords.} Sz{\'a}sz-Mirakjan operators, Jain basis functions, Jain operators, 
Lambert W-function, Boham-Korovkin theorem. 

\smallskip
\noindent \textbf{2010 Mathematics Subject Classification}: 33E20, 41A25, 41A36.

\vspace{10mm}
\section{Introduction}
In Approximation theory positive linear operatos have been studied with the test functions $\{1, x, x^2 \}$ in order 
to determine the convergence of a function. Of interest are the Sz\'{a}sz-Mirakjan operators, based on the Poisson
distribution, which are useful in approximating functions on $[0, \infty)$ and are defined as, \cite{GMM}, \cite{OS},
\begin{align}\label{e1} 
S_{n}(f;x) = \sum_{k=0}^{\infty} \frac{(n x)^{k}}{k!} \, e^{- n x} \, f\left(\frac{k}{n}\right).
\end{align}

In 1972, Jain \cite{GCJ}, used the Lagrange expansion formula
\begin{align}\label{e2} 
\phi(z) = \phi(0) + \sum_{k=1}^{\infty} \frac{1}{k!} \, \left[ D^{k-1} \, \left( f^{k}(z) \, \phi'(z) \right) 
\right]_{z=0} \, \left(\frac{z}{f(z)}\right)^{k}
\end{align}
with $\phi(z) = e^{\alpha \, z}$ and $f(z) = e^{\beta \, z}$ to determined that
\begin{align}\label{e3} 
1 = \alpha \, \sum_{k=0}^{\infty} \frac{1}{k!} \, (\alpha + \beta k)^{k-1} \, z^{k} \, e^{- (
\alpha + \beta \, k) \, z}.
\end{align}
Jain established the basis functions
\begin{align}\label{e4} 
L_{n,k}^{(\beta)}(x) = \frac{n x \, (n x + \beta k)^{k-1}}{k!} \, e^{-(n x + \beta \, k)}
\end{align}
with the normalization 
\begin{align*}
\sum_{k=0}^{\infty} L_{n,k}^{(\beta)}(x) = 1
\end{align*}
and considered the operators
\begin{align}\label{e5} 
B_{n}^{\beta}(f,x) = \sum_{k=0}^{\infty} L_{n,k}^{(\beta)}(x) \, f\left(\frac{k}{n}\right) \hspace{15mm} 
x \in [0,\infty).
\end{align}
In the reduction of $\beta = 0$ the Jain operators reduce to the Sz\'{a}sz-Mirakjan operators.

Recently Acar, Aral, and Gonska \cite{AAG} considered the Sz\'{a}sz-Mirakjan operators which preserve the 
test functions $\{1, e^{a x} \}$ and established the operators
\begin{align}\label{e6} 
R_{n}^{*}(f;x) = e^{-n \, \gamma_{n}(x)} \, \sum_{k=0}^{\infty} \frac{(n \, \gamma_{n}(x))^{k}}{k!} \, 
f\left(\frac{k}{n}\right)
\end{align}
for functions $f \in C[0,\infty)$, $x \geq 0$, and $n \in \mathbb{N}$ with the reservation property
\begin{align}\label{e7} 
R_{n}^{*}(e^{2 a t};x) = e^{2 a x}.
\end{align}
Here the Jain basis is used to extend the the class of operators for the test functions $\{1, e^{- \lambda x}\}$ by 
defining Sz\'{a}sz-Mirakyan-Jain operators which preserve the mapping of $e^{- \lambda x}$, for $\lambda, x > 0$. 
In the case of $\lambda=0$ the Sz\'{a}sz-Mirakyan-Jain operators are constant preserving operators. Moments, recurrence
formulas, and other identities are established for these new operators. Approximation properties are also
obtained with use of the Boham-Korovkin theorem. The Lambert W-function and related properties are used 
in the analaysis of the properties obtained for the Sz\'{a}sz-Mirakyan-Jain operators.

\section{Sz\'{a}sz-Mirakyan-Jain Operators}
The Sz\'{a}sz-Mirakyan-Jain operators, (SMJ), which are a generalization of the Sz\'{a}sz-Mirakyan operators, are defined by
\begin{align}\label{e8} 
R_{n}^{(\beta)}(f;x) = n \, \alpha_{n}(x) \, \sum_{k=0}^{\infty} \frac{1}{k!} \, (n \, \alpha_{n}(x) + \beta \, k)^{k-1}
\, e^{-(n \, \alpha_{n}(x) + \beta \, k)} \, f\left(\frac{k}{n}\right)
\end{align}
for $f \in C[0,\infty)$. It is required that these operators preserve the mapping of $e^{- \lambda \, x}$, as 
given by
\begin{align}\label{e9} 
R_{n}^{(\beta)}(e^{ - \lambda t} ; x) = e^{- \lambda x}
\end{align}
where $x \geq 0$ and $n \in \mathbb{N}$, and $\lambda \geq 0$. When $\beta = 0$ in \eqref{e8} the operator reduces to that defined 
by Acar, Aral, and Gonska \cite{AAG}. When $\beta = 0$ and $\alpha_{n}(x) = x$ the operator reduces to the well
known Sz\'{a}sz-Mirakyan operators given by \eqref{e1}. For $0 \leq \beta < 1$ and $\alpha_{n}(x) = x$ these operators
reduce to the Sz\'{a}sz-Mirakyan-Durrmeyer operators defined by Gupta and Greubel in \cite{GG1}.

\begin{lemma} \label{L1} 
For $x \geq 0, \lambda \geq 0$, we have
\begin{align}\label{e10} 
\alpha_{n}(x) = \frac{ - \lambda \, x}{n \, (z(\lambda/n, \beta) - 1)},
\end{align}
where $-\beta \, z(t, \beta) = W(-\beta \, e^{-\beta - t})$ and $W(x)$ is the Lambert W-function.
\end{lemma}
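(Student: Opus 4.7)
The plan is to substitute $f(t) = e^{-\lambda t}$ into the defining sum \eqref{e8}, rewrite the resulting series as an instance of Jain's Lagrange-expansion identity \eqref{e3}, and then compare the closed form with the preservation requirement \eqref{e9}, solving the resulting transcendental equation via the Lambert $W$-function.

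First I would abbreviate $a := n \, \alpha_{n}(x)$ and write
\begin{align*}
R_{n}^{(\beta)}(e^{-\lambda t}; x)
= a \, e^{-a} \sum_{k=0}^{\infty} \frac{1}{k!} (a + \beta k)^{k-1} \, e^{-(\beta + \lambda/n) k}.
\end{align*}
The goal is to recognize the sum as the right-hand side of Jain's identity \eqref{e3}. Rewriting \eqref{e3} at a general point $z$ (with parameter $\alpha$) in the form
\begin{align*}
e^{\alpha z} = \alpha \sum_{k=0}^{\infty} \frac{1}{k!} (\alpha + \beta k)^{k-1} \, z^{k} \, e^{-\beta k z},
\end{align*}
one sees that our series matches provided $z^{k} e^{-\beta k z} = e^{-(\beta + \lambda/n) k}$ for all $k$, equivalently
\begin{align*}
z \, e^{-\beta z} = e^{-\beta - \lambda/n}.
\end{align*}
Multiplying by $-\beta$ casts this as $(-\beta z) e^{-\beta z} = -\beta \, e^{-\beta - \lambda/n}$, which is exactly the defining equation $-\beta z = W(-\beta e^{-\beta - \lambda/n})$ for $z = z(\lambda/n, \beta)$. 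This is the step where the Lambert $W$-function enters; it is the main (and really only) technical obstacle, and its justification is just the functional equation $W(u)e^{W(u)} = u$.

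With that choice of $z$, the Jain identity gives
\begin{align*}
a \sum_{k=0}^{\infty} \frac{1}{k!} (a + \beta k)^{k-1} \, e^{-(\beta + \lambda/n) k} = e^{a z},
\end{align*}
so $R_{n}^{(\beta)}(e^{-\lambda t}; x) = e^{-a} \cdot e^{a z} = e^{a(z-1)}$. Finally I would impose \eqref{e9}: $e^{a(z-1)} = e^{-\lambda x}$, which forces $a(z-1) = -\lambda x$ and hence
\begin{align*}
\alpha_{n}(x) = \frac{a}{n} = \frac{-\lambda x}{n \, (z(\lambda/n, \beta) - 1)},
\end{align*}
as claimed. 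A small sanity check to include would be the $\lambda \to 0$ limit: then $-\beta e^{-\beta - \lambda/n} \to -\beta e^{-\beta}$, so $W(-\beta e^{-\beta}) = -\beta$, giving $z \to 1$ and recovering $\alpha_{n}(x) \to x$ by l'H\^opital, consistent with the constant-preserving case.
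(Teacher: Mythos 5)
Your proposal is correct and follows essentially the same route as the paper: substitute $f(t)=e^{-\lambda t}$, recognize the series via Jain's identity \eqref{e3} with the matching condition $z\,e^{-\beta z}=e^{-\beta-\lambda/n}$ (the paper writes this as $\beta z-\ln z=\beta+\lambda/n$), solve for $z$ with the Lambert $W$-function, and equate $e^{n\alpha_n(x)(z-1)}=e^{-\lambda x}$. One small slip in your closing sanity check: as $\lambda\to 0$ the limit of $\alpha_n(x)$ is $(1-\beta)\,x$, not $x$ (this is the content of the paper's Remark 1, consistent with $R_n^{(\beta)}(f;x)\big|_{\lambda\to 0}=B_n^{\beta}(f;(1-\beta)x)$), though this does not affect the validity of the lemma's proof.
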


\begin{proof}
Considering the mapping \eqref{e9} it is required that
\begin{align}\label{e11} 
e^{- \lambda x} &= n \, \alpha_{n}(x) \, \sum_{k=0}^{\infty} \frac{1}{k!} \, (n \, \alpha_{n}(x) + \beta \, k)^{k-1}
\, e^{-(n \, \alpha_{n}(x) + \beta k)} \,  e^{- \lambda k/n}
\end{align}
Making use of \eqref{e3} in the form
\begin{align}\label{e12} 
e^{n \, \alpha_{n}(x) \, z} = n \, \alpha_{n}(x) \, \sum_{k=0}^{\infty} \frac{1}{k!} \, (n \, \alpha_{n}(x) + 
\beta k )^{k-1} \, e^{-(\beta z - \ln(z)) k}
\end{align}
and letting $\beta \, z - \ln(z) = \beta + \frac{\lambda}{n}$ then 
\begin{align*}
e^{n \, \alpha_{n}(x) \, z} = n \, \alpha_{n}(x) \, \sum_{k=0}^{\infty} \frac{1}{k!} \, (n \, \alpha_{n}(x) + 
\beta k)^{k-1} \, e^{-(\beta + \lambda/n) \, k}
\end{align*}
which provides
\begin{align*}
e^{-\lambda x} = e^{n \, \alpha_{n}(x) \, (z-1)}
\end{align*}
or 
\begin{align*}
\alpha_{n}(x) = - \frac{\lambda \, x}{n \, (z(\lambda/n, \beta) - 1)}.
\end{align*}
The value of $z$ is determined by the equation $\beta \, z - \ln(z) = \beta + \frac{\lambda}{n}$ which can be seen 
in the form
\begin{align*}
z \, e^{- \beta \, z} = e^{- \beta - \lambda/n}
\end{align*}
and has the solution
\begin{align}\label{e13} 
z(\lambda/n, \beta) = - \frac{1}{\beta} \, W(- \beta \, e^{- \beta - \lambda/n}),
\end{align}
where $W(x)$ is the Lambert W-function.
\end{proof}

\begin{remark}
For the case $\lambda \to 0$ the resulting $\alpha_{n}(x)$ is 
\begin{align*}
\lim_{\lambda \to 0} \, \alpha_{n}(x) = (1-\beta) \, x. 
\end{align*}
\end{remark}


\begin{proof}
For the case $\lambda \to 0$ the resulting $z = z(\lambda/n, \beta)$ of \eqref{e13} yields $z(0, \beta) = 1$.
By considering 
\begin{align*}
\frac{\partial z}{\partial \lambda} = - \frac{1}{\beta} \, \frac{\partial}{\partial \lambda} \, W(- \beta \, e^{- \beta - \lambda/n}) 
= \frac{W(- \beta \, e^{- \beta - \lambda/n})}{n \, \beta \, ( 1 + W(- \beta \, e^{- \beta - \lambda/n}))}
\end{align*}
and 
\begin{align*}
\lim_{\lambda \to 0} \, \frac{\partial z}{\partial \lambda} = - \frac{1}{n \, ( 1 - \beta)}.
\end{align*}
Now, by use of L'Hospital's rule, 
\begin{align*}
\lim_{\lambda \to 0} \, \alpha_{n}(x) &= \frac{x}{n} \, \lim_{\lambda \to 0} \frac{\lambda}{z - 1} = \frac{x}{n}
\, \lim_{\lambda \to 0} \frac{1}{\frac{\partial z}{\partial \lambda}} = (1-\beta) \, x
\end{align*}
as claimed.
\end{proof}


By taking the case of $\lambda \to 0$ the operators $R_{n}^{(\beta)}(f;x)$ reduce from exponential preserving to 
constant preserving operators. In this case the operators $R_{n}^{(\beta)}(f;x)|_{\lambda \to 0}$ are related to the
Jain operators, \eqref{e5}, by $R_{n}^{(\beta)}(f; x) = B_{n}^{\beta}(f;(1- \beta) \, x)$.

The SMJ operators are now completely defined by
\begin{equation}\label{e14} 
\left \{
\begin{aligned}
R_{n}^{(\beta)}(f;x) &= n \, \alpha_{n}(x) \, \sum_{k=0}^{\infty} \frac{1}{k!} \, (n \, \alpha_{n}(x) + \beta \, k)^{k-1}
\, e^{-(n \, \alpha_{n}(x) + \beta \, k)} \, f\left(\frac{k}{n}\right), \\
\alpha_{n}(x) &= - \frac{\lambda \, x}{n \, (z(\lambda/n, \beta) - 1)}, \\
z(t, \beta) &= - \frac{1}{\beta} \, W(- \beta \, e^{- \beta - t})
\end{aligned} \right.
\end{equation} 
and the requirement that $R_{n}^{(\beta)}(e^{- \lambda t} ; x) = e^{-\lambda x}$, for $x \geq 0$, $\lambda \geq 0$ and
$n \in \mathbb{N}$.

\section{Moment Estimations}

\begin{lemma} \label{L2} 
The moments for the SMJ operators are given by:
\begin{align}\label{e15} 
R_{n}^{(\beta)}(1; x) &= 1 \nonumber\\
R_{n}^{(\beta)}(t; x) &= \frac{\alpha_{n}(x)}{1-\beta} \nonumber\\
R_{n}^{(\beta)}(t^{2}; x) &= \frac{\alpha_{n}^{2}(x)}{(1-\beta)^{2}} + \frac{\alpha_{n}(x)}{n \, (1-\beta)^{3}} \\
R_{n}^{(\beta)}(t^{3}; x) &= \frac{\alpha_{n}^{3}(x)}{(1-\beta)^{3}} + \frac{3 \, \alpha_{n}^{2}(x)}{n \, (1-\beta)^{4}} +
(1 + 2 \, \beta) \, \frac{\alpha_{n}(x)}{n^{2} \, (1-\beta)^{5}} \nonumber\\
R_{n}^{(\beta)}(t^{4}; x) &= \frac{\alpha_{n}^{4}(x)}{(1-\beta)^{4}} + \frac{6 \, \alpha_{n}^{3}(x)}{n \, (1-\beta)^{5}} +
(7 + 8 \, \beta) \, \frac{\alpha_{n}^{2}(x)}{n^{2} \, (1-\beta)^{6}} + (1 + 8 \beta + 6 \beta^{2}) \, 
\frac{\alpha_{n}(x)}{n^{3} \, (1-\beta)^{7}} \nonumber\\
R_{n}^{(\beta)}(t^{5}; x) &= \frac{\alpha_{n}^{5}(x)}{(1-\beta)^{5}} + \frac{10 \, \alpha_{n}^{4}(x)}{n \, (1-\beta)^{6}} +
5 \, (5 + 4 \, \beta) \, \frac{\alpha_{n}^{3}(x)}{n^{2} \, (1-\beta)^{7}} \nonumber\\
& \hspace{10mm} + 15 \, (1 + 4 \beta + 2 \beta^{2}) \, \frac{\alpha_{n}^{2}(x)}{n^{3} \, (1-\beta)^{8}} 
+ (1 + 22 \beta + 58 \beta^{2} + 24 \beta^{3}) \, \frac{\alpha_{n}(x)}{n^{4} \, (1-\beta)^{9}}. 
\nonumber
\end{align}
\end{lemma}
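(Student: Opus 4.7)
The key observation is that, writing $a := n\alpha_n(x)$, the SMJ operator takes the form
$$R_n^{(\beta)}(f;x) = \sum_{k=0}^\infty L_{n,k}(a)\, f(k/n), \qquad L_{n,k}(a) := \frac{a(a+\beta k)^{k-1}}{k!}\, e^{-(a+\beta k)},$$
so that $R_n^{(\beta)}(t^j;x) = S_j / n^j$ where $S_j := \sum_{k=0}^\infty k^j L_{n,k}(a)$. The zeroth moment $R_n^{(\beta)}(1;x)=S_0=1$ follows at once from \eqref{e3} specialized to $z=1$.

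For the higher moments the plan is to use \eqref{e3} as an identity in $z$. Setting
$$F(z) := a\sum_{k=0}^\infty \frac{(a+\beta k)^{k-1}}{k!}\, z^k\, e^{-(a+\beta k)z},$$
we have $F(z)\equiv 1$ in a neighbourhood of $z=1$, hence $F^{(j)}(1)=0$ for every $j\geq 1$. By the Leibniz rule
$$\frac{d^j}{dz^j}\bigl[z^k e^{-(a+\beta k)z}\bigr]_{z=1} = e^{-(a+\beta k)} \sum_{i=0}^j \binom{j}{i} \frac{k!}{(k-i)!}\, \bigl(-(a+\beta k)\bigr)^{j-i},$$
which is an explicit polynomial in $k$ of degree $j$. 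Substituting back and collecting the sums $S_0,\dots,S_j$ produces a triangular linear system whose leading coefficient is $(1-\beta)^j$, so $S_j$ is determined recursively from $S_0,\dots,S_{j-1}$.

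In execution, the case $j=1$ gives $(1-\beta)S_1 - a = 0$, whence $R_n^{(\beta)}(t;x) = \alpha_n(x)/(1-\beta)$. The case $j=2$ gives $(1-\beta)^2 S_2 - \bigl(1+2a(1-\beta)\bigr)S_1 + a^2 = 0$, and back-substitution produces the stated second moment. Iterating for $j=3,4,5$ and then dividing by $n^j$ with $a=n\alpha_n(x)$ yields the formulas in \eqref{e15}. The main obstacle is purely computational: the polynomials in $k$ arising from the Leibniz expansion have degree $4$ and $5$, and the combinatorial coefficients such as $1+2\beta$, $7+8\beta$, $1+8\beta+6\beta^2$ and $1+22\beta+58\beta^2+24\beta^3$ only emerge after careful collection of like powers of $(1-\beta)^{-1}$ in the recursive substitution; verifying them reduces to checking polynomial identities in $a$ and $\beta$.
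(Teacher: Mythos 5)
Your approach is correct, and it is genuinely different from the paper's: the paper offers no computation at all for Lemma \ref{L2}, simply citing earlier moment calculations for the Jain basis (the SMJ moments are the known Jain-operator moments with $x$ replaced by $\alpha_{n}(x)$), whereas you give a self-contained derivation by treating \eqref{e3} as an identity $F(z)\equiv 1$ in $z$ near $z=1$ and setting $F^{(j)}(1)=0$. Your execution checks out where you carried it out: the $j=1$ relation $(1-\beta)S_{1}=a$ and the $j=2$ relation $(1-\beta)^{2}S_{2}-\bigl(1+2a(1-\beta)\bigr)S_{1}+a^{2}=0$ are exactly what the Leibniz expansion gives, and they reproduce the stated first and second moments; the leading coefficient of $S_{j}$ is indeed $\sum_{i}\binom{j}{i}(-\beta)^{j-i}=(1-\beta)^{j}\neq 0$ for $0\le\beta<1$, so the system is triangular and determines every moment. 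What your route buys is independence from the cited references \cite{G1,GG1,GG2}; what the paper's (implicit) route buys is brevity, and it is in the spirit of the generating-function method the paper itself uses later in Lemma \ref{L6}, where $R_{n}^{(\beta)}(e^{-\mu t}\phi^{m};x)=(-1)^{m}(d/d\mu+x)^{m}e^{n\alpha_{n}(x)(z_{\mu}-1)}$ and the raw moments follow by letting $\mu\to 0$ (differentiation in the exponential parameter $\mu$ rather than in your auxiliary variable $z$). Two small points you should make explicit: (i) \eqref{e3} holds as an identity on an interval of $z$ containing $1$ (for $0\le\beta<1$ the series converges since $\beta z\,e^{1-\beta z}<1$ there), and term-by-term differentiation is justified by locally uniform convergence of the differentiated series, which is what licenses $F^{(j)}(1)=0$; (ii) the cases $j=3,4,5$ are asserted rather than displayed, which is acceptable for a purely computational lemma but should be flagged as routine verification of polynomial identities in $a$ and $\beta$, as you do.
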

The proof follows directly from work of the author dealing with moment operators for the Jain basis, 
see \cite{G1, GG1, GG2}.

\begin{lemma} \label{L3} 
Let, $\phi = t - x$, then the central moments of the SMJ operators are:
\begin{align}\label{e16} 
R_{n}^{(\beta)}(\phi^{0}; x) &= 1 \nonumber\\
R_{n}^{(\beta)}(\phi^{1}; x) &= \frac{\alpha_{n}(x)}{1-\beta} - x \nonumber\\
R_{n}^{(\beta)}(\phi^{2}; x) &= \left(\frac{\alpha_{n}(x)}{1-\beta} - x \right)^{2} + \frac{\alpha_{n}(x)}{n \, (1-\beta)^{3}} \\
R_{n}^{(\beta)}(\phi^{3}; x) &= \left(\frac{\alpha_{n}(x)}{1-\beta} - x \right)^{3} + \frac{3 \, \alpha_{n}(x)}{n \, (1-\beta)^{3}}
\, \left(\frac{\alpha_{n}(x)}{1-\beta} - x \right) + (1 + 2 \, \beta) \, \frac{\alpha_{n}(x)}{n^{2} \, (1-\beta)^{5}} 
\nonumber
\end{align}
\begin{align*}
R_{n}^{(\beta)}(\phi^{4}; x) &= \left(\frac{\alpha_{n}(x)}{1-\beta} -x \right)^{4} + \frac{6 \, \alpha_{n}(x)}{n \, (1-\beta)^{3}}
\, \left(\frac{\alpha_{n}(x)}{1-\beta} - x \right)^{2} +
(7 + 8 \, \beta) \, \frac{\alpha_{n}(x)}{n^{2} \, (1-\beta)^{5}} \nonumber\\
& \hspace{10mm} \cdot \left(\frac{\alpha_{n}(x)}{1-\beta} - x \right) + (1 + 8 \beta + 6 \beta^{2}) \, 
\frac{\alpha_{n}(x)}{n^{3} \, (1-\beta)^{7}} + \frac{3 \, \alpha_{n}(x)}{n^{2} \, (1-\beta)^{5}} \nonumber\\
R_{n}^{(\beta)}(\phi^{5}; x) &= \left(\frac{\alpha_{n}(x)}{1-\beta} - x\right)^{5} + \frac{10 \, \alpha_{n}(x)}{n \, (1-\beta)^{3}} 
\left( \frac{\alpha_{n}(x)}{1-\beta} - x \right)^{3} + \frac{5 \,\alpha_{n}(x)}{n^{2} \, (1-\beta)^{5}} \left(
\frac{\alpha_{n}(x)}{1-\beta} - x\right) \cdot \mu_{1}  \nonumber\\
& \hspace{15mm} + \frac{5 \, \alpha_{n}(x)}{n^{3} \, (1-\beta)^{7}} \cdot \mu_{2} + (1 + 22 \beta + 58 \beta^{2}
+ 24 \beta^{3}) \, \frac{\alpha_{n}(x)}{n^{4} \, (1-\beta)^{9}}, 
\nonumber
\end{align*}
\end{lemma}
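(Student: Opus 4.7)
The proof is a direct algebraic reduction to Lemma~\ref{L2} via the binomial theorem, so no new analytic input is needed. My plan has three stages.

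\textbf{Stage one.} By linearity of $R_n^{(\beta)}$ and the binomial theorem,
\[
R_n^{(\beta)}(\phi^m; x) = \sum_{j=0}^{m}\binom{m}{j}(-x)^{m-j}\, R_n^{(\beta)}(t^j; x),\qquad m=0,1,\ldots,5,
\]
and I substitute the closed forms for $R_n^{(\beta)}(t^j; x)$ given in Lemma~\ref{L2}. The outcome is a polynomial in $\alpha_n(x)$, $x$, $1/n$, $1/(1-\beta)$ and $\beta$.

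\textbf{Stage two.} I would regroup this polynomial by powers of $1/n$. The $n^0$-block contains only the $\alpha_n(x)^j/(1-\beta)^j$ contributions, and the binomial theorem immediately gives
\[
\sum_{j=0}^{m}\binom{m}{j}\left(\frac{\alpha_n(x)}{1-\beta}\right)^{\!j}(-x)^{m-j} = \left(\frac{\alpha_n(x)}{1-\beta}-x\right)^{\!m},
\]
which yields the leading power $M_1^m$ of the first central moment $M_1 := \alpha_n(x)/(1-\beta)-x$ in each formula. For each $r\geq 1$, the $n^{-r}$-block inherits its $\beta$-dependent combinatorial coefficients directly from Lemma~\ref{L2}; applying the identity $\binom{m}{j}\binom{j}{s}=\binom{m}{s}\binom{m-s}{j-s}$ and then using the binomial theorem a second time reassembles it as a $\beta$-dependent multiple of $\alpha_n(x)^s\, M_1^{m-r-s}/\bigl(n^r(1-\beta)^{e}\bigr)$, with the exponent $e=e(r,s)$ again read off from Lemma~\ref{L2}. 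For $m=2,3$ this reproduces the stated compact form after one or two elementary regroupings.

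\textbf{Main obstacle.} The only nontrivial aspect is bookkeeping at $m=4$ and $m=5$: one must verify that the many cross-terms between $\alpha_n(x)^j$ and $(-x)^{m-j}$ reassemble into the prescribed shifted powers $M_1^k$ with exactly the claimed $\beta$-dependent coefficients, and identify the auxiliary quantities $\mu_1,\mu_2$ in the $\phi^5$ formula by matching coefficients block by block against $R_n^{(\beta)}(t^5; x)$. This is tedious but mechanical; no convergence or positivity issues appear, since the relevant generating series have already been summed in Lemma~\ref{L2}.
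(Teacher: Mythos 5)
Your proposal is correct and follows essentially the same route as the paper: the paper's proof likewise expands $\phi^m=(t-x)^m$ by the binomial theorem, writes $R_n^{(\beta)}(\phi^m;x)=\sum_{k=0}^m(-1)^k\binom{m}{k}x^k\,R_n^{(\beta)}(t^{m-k};x)$, substitutes the raw moments of Lemma~\ref{L2}, and regroups. In fact the paper carries out the regrouping explicitly only for $m\leq 2$ and leaves the higher cases to the same mechanical bookkeeping you describe.
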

where 
\begin{align*}
\mu_{1} &= (5 + 4 \beta) \, \left( \frac{\alpha_{n}(x)}{1 - \beta} - x \right) + 3 \, x \\
\mu_{2} &= 3 \, (1 + 4 \beta + 2 \beta^{2}) \, \left( \frac{\alpha_{n}(x)}{1-\beta} - x \right) + 2 \, (1 + 2
\beta) \, x.
\end{align*}

\begin{proof}
Utilizing the binomial expansion
\begin{align*}
\phi^{m} = (t - x)^{m} = \sum_{k=0}^{m} (-1)^{k} \, \binom{m}{k} \, t^{m-k} \, x^{k}
\end{align*}
then
\begin{align}\label{e17} 
R_{n}^{(\beta)}(\phi^{m}; x) = \sum_{k=0}^{m} (-1)^{k} \, \binom{m}{k} \, x^{k} \, R_{n}^{(\beta)}(t^{m-k}; x).
\end{align}
Wih the use of \eqref{e15} the first few values of $m$ are:
\begin{align*}
R_{n}^{(\beta)}(\phi^{0}; x) &= R_{n}^{(\beta)}(t^{0}; x) = 1 \\
R_{n}^{(\beta)}(\phi^{1}; x) &= R_{n}^{(\beta)}(t; x) - x \, R_{n}^{(\beta)}(t^{0}; x) = \frac{\alpha_{n}(x)}{1-\beta} - x \\
R_{n}^{(\beta)}(\phi^{2}; x) &= R_{n}^{(\beta)}(t^{2}; x) - 2 x \, R_{n}^{(\beta)}(t;x) + x^{2} \, R_{n}^{(\beta)}(t^{0}; x) \\
&= \frac{\alpha_{n}^{2}(x)}{(1-\beta)^{2}} + \frac{\alpha_{n}(x)}{n \, (1-\beta)^{3}} - 2x \, \frac{\alpha_{n}(x)}
{1-\beta} + x^{2} \\
&= \left( \frac{\alpha_{n}(x)}{1 - \beta} - x\right)^{2} + \frac{\alpha_{n}(x)}{n \, (1-\beta)^{3}}
\end{align*}
The remainder of the central moments follow from \eqref{e15} and \eqref{e17}.
\end{proof}

\begin{lemma} \label{L4}
The central moments, given in Lemma 3, lead to the limits: 
\begin{equation}
\begin{aligned}\label{e18} 
\lim_{n \to \infty} \, n \, R_{n}^{(\beta)}(\phi; x) &= \frac{\lambda \, x}{2! \, (1-\beta)^{2}} \\
\lim_{n \to \infty} \, n \, R_{n}^{(\beta)}(\phi^{2}; x) &= \frac{x}{(1-\beta)^{2}}
\end{aligned}
\end{equation}

\end{lemma}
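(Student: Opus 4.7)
The plan is to extract sufficient asymptotic information about $\alpha_{n}(x)$ as $n \to \infty$ and then plug it into the closed-form expressions for $R_{n}^{(\beta)}(\phi;x)$ and $R_{n}^{(\beta)}(\phi^{2};x)$ given in Lemma \ref{L3}. Since the first formula involves the difference $\tfrac{\alpha_{n}(x)}{1-\beta}-x$, which the preceding remark shows tends to $0$, a one-term expansion is insufficient: we need a two-term expansion of $\alpha_{n}(x)$ to locate the $1/n$ correction.

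First I would work directly from the implicit equation $\beta z - \ln z = \beta + \lambda/n$ obtained in the proof of Lemma \ref{L1}. Setting $t=\lambda/n$ and $z = 1+u$ with $u$ small, Taylor-expanding $\ln(1+u)$ yields
\begin{align*}
-(1-\beta)\,u + \tfrac{1}{2}u^{2} - \tfrac{1}{3}u^{3} + \cdots = t,
\end{align*}
which can be inverted by iteration to give
\begin{align*}
u = z-1 = -\frac{t}{1-\beta} + \frac{t^{2}}{2(1-\beta)^{3}} + O(t^{3}).
\end{align*}
Substituting $t=\lambda/n$ into $\alpha_{n}(x) = -\lambda x/(n(z-1))$ and expanding the reciprocal then produces
\begin{align*}
\alpha_{n}(x) = (1-\beta)\,x + \frac{\lambda\, x}{2\,n\,(1-\beta)} + O(n^{-2}),
\end{align*}
from which
\begin{align*}
\frac{\alpha_{n}(x)}{1-\beta} - x = \frac{\lambda\, x}{2\,n\,(1-\beta)^{2}} + O(n^{-2}).
\end{align*}

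With this in hand the limits are immediate. Multiplying the displayed relation by $n$ gives the first statement in \eqref{e18}. For the second moment, Lemma \ref{L3} gives
\begin{align*}
n\,R_{n}^{(\beta)}(\phi^{2};x) = n\Bigl(\tfrac{\alpha_{n}(x)}{1-\beta}-x\Bigr)^{2} + \frac{\alpha_{n}(x)}{(1-\beta)^{3}};
\end{align*}
the first summand is $O(n^{-1})$ by the above expansion, while the second summand converges to $x/(1-\beta)^{2}$ since $\alpha_{n}(x) \to (1-\beta)x$.

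The one step that requires care is the two-term inversion of the transcendental relation $\beta z - \ln z = \beta + \lambda/n$; alternatively, one could derive the same expansion from the known Taylor series of $W$ about $W(-\beta e^{-\beta})=-\beta$ using the representation $z(\lambda/n,\beta)=-W(-\beta e^{-\beta-\lambda/n})/\beta$ and the identity $W'(w) = W(w)/(w(1+W(w)))$ (which was already invoked in the remark following Lemma \ref{L1}). Either route gives the same $1/n$ coefficient, and once that is in place the two limits follow by direct substitution.
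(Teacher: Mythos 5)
Your proof is correct, and its overall strategy coincides with the paper's: obtain a two-term (through order $1/n$) expansion of $\alpha_{n}(x)$, observe that $\tfrac{\alpha_{n}(x)}{1-\beta}-x = \tfrac{\lambda x}{2n(1-\beta)^{2}}+O(n^{-2})$, and substitute into the closed forms of Lemma \ref{L3}, where for the second moment the squared term is $O(n^{-1})$ after multiplication by $n$ and the surviving term $\alpha_{n}(x)/(1-\beta)^{3}\to x/(1-\beta)^{2}$. Where you differ is in how the expansion is produced: the paper invokes its appendix series \eqref{a4} for $t/((1-\beta)(z(t)-1))$, which is built from the Lambert $W$ expansion \eqref{a3} in terms of Eulerian polynomials of the second kind, whereas you perform an elementary series reversion of the defining relation $\beta z-\ln z=\beta+\lambda/n$ by writing $z=1+u$ and inverting $-(1-\beta)u+\tfrac12 u^{2}-\cdots=\lambda/n$, which correctly yields $z-1=-\tfrac{t}{1-\beta}+\tfrac{t^{2}}{2(1-\beta)^{3}}+O(t^{3})$ and hence the same $1/n$ coefficient. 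Your route is more self-contained and needs only the branch with $z\to1$ as $t\to0$ (consistent with the remark after Lemma \ref{L1}); the paper's route, at the cost of the Lambert-$W$ machinery, supplies the higher-order coefficients that are reused later (e.g., in the limits \eqref{e19} and in Lemma \ref{L5}). For Lemma \ref{L4} itself, either derivation of the expansion suffices, and your limiting arguments are sound.
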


\begin{proof}
By setting $t = \lambda/n$ in \eqref{a4} then
\begin{align*}
& \frac{(- \lambda)}{n \, (1-\beta) \, (z(\lambda/n,\beta) - 1)} = 1 + \frac{v}{2!} + 2\, (1-4\beta) \, \frac{v^{2}}{4!} + 
6 \beta^{2} \, \frac{v^{3}}{4!} \nonumber\\ 
& \hspace{10mm} - (1 - 8\beta + 88\beta^{2} + 144\beta^{3}) \, \frac{v^{4}}{6!}  + 840 \beta^{2}\,(1 + 12\beta + 
8\beta^{2}) \, \frac{v^{5}}{8!} + O(v^{6}),
\end{align*}
where $n \, (1-\beta)^{2} \, v = \lambda$. This expansion may be placed into the form
\begin{align*}
\frac{\alpha_{n}(x)}{1-\beta} - x &= \frac{v \, x}{2!} \, \left( 1 + (1-4\beta) \, \frac{v}{3!} + 12\beta^{2} \, 
\frac{v^{2}}{4!} - O(v^{3}) \right).
\end{align*}
Multiplying by $n$ and taking the desired limit the resulting value is given by
\begin{align*}
\lim_{n \to \infty} \, n \, R_{n}^{(\beta)}(\phi; x) = \frac{\lambda \, x}{2! \, (1-\beta)^{2}}.
\end{align*}
It is evident that
\begin{align*}
\left(\frac{\alpha_{n}(x)}{1-\beta} - x \right)^{2} &= \left(\frac{v \, x}{2!}\right)^{2} \, \left( 1 + 2(1-4\beta)
\, \frac{v}{3!} + 20 \, (1-8\beta + 52\beta^{2}) \, \frac{v^{2}}{6!} - O(v^{3}) \right)
\end{align*}
for which
\begin{align*}
\left( \frac{\alpha_{n}(x)}{1-\beta} - x\right)^{2} &+ \frac{\alpha_{n}(x)}{n \, (1-\beta)^{3}} \\
&= \left(\frac{v \, x}{2!}\right)^{2} \, \left( 1 + 2(1-4\beta) \, \frac{v}{3!} + 20 \, (1-8\beta + 52\beta^{2}) \, 
\frac{v^{2}}{6!} - O(v^{3}) \right) \\
& \hspace{5mm} + \frac{x}{n \, (1-\beta)^{2}} \, \left(1 + \frac{v}{2!} + 2\, (1-4\beta) \, \frac{v^{2}}{4!} - O(v^{3}) \right)
\end{align*}
Multiplying by $n$ and taking the limit yields
\begin{align*}
\lim_{n \to \infty} \, n \, R_{n}^{(\beta)}(\phi^{2}; x) = \frac{x}{(1-\beta)^{2}}.
\end{align*}
\end{proof}


\begin{remark}
Other limits may be determined by extending the work of Lemma 4, such as:
\begin{equation}\label{e19} 
\begin{aligned}
\lim_{n\to \infty} R_{n}^{(\beta)}(\phi^{m}; x) &= 0, \mbox{ for } m \geq 1 \\
\lim_{n \to \infty} n \, R_{n}^{(\beta)}(\phi^{m}; x) &= 0, \mbox{ for } m \geq 3 \\
\lim_{n \to \infty} n^{2} \, R_{n}^{(\beta)}(\phi^{3}; x) &= \frac{2(1 + 2 \beta) \, x + 3 \lambda \, x^{2}}{2! \, 
(1-\beta)^{4}} \\
\lim_{n \to \infty} n^{2} \, R_{n}^{(\beta)}(\phi^{4}; x) &= \frac{3 \, x^{2}}{(1-\beta)^{4}} 
\end{aligned}
\end{equation}
\end{remark}



\begin{lemma} \label{L5}
Expansion on a general exponential weight is given by
\begin{align*}
R_{n}^{(\beta)}(e^{-\mu \, t} ; x) = e^{n \, \alpha_{n}(x) \, (z(\mu/n, \beta) - 1)},
\end{align*}
or
\begin{align}\label{e20} 
R_{n}^{(\beta)}(e^{- \mu \, t} ; x) = Exp\left[ -\lambda \, x \, \left(\frac{z(\mu/n,\beta) - 1}{z(\lambda/n, \beta)-1} \right)
\right] = Exp\left[ - \mu \, x \cdot \frac{\lambda}{\mu} \, \frac{z(\mu/n, \beta) - 1}{z(\lambda/n, \beta) - 1} \right]
\end{align} 
for $\mu \geq 0$ and has the expansion 
\begin{equation}\label{e21} 
\begin{aligned}
R_{n}^{(\beta)}(e^{-\mu \, t}; x) &= e^{-\mu \, x} \, \left( 1 + \frac{\mu (\mu - \lambda) x}{2! \, n (1-\beta)^{2}} + 
( (3 \mu x - 4 - 8 \beta) \mu \right. \\
& \hspace{10mm} \left. - (3 \mu x - 2 + 8 \beta) \lambda) \, \frac{\mu (\mu - \lambda) x}{4! \, n^{2} (1-\beta)^{4}} 
+ \mathcal{O}\left(\frac{\mu (\mu - \lambda) x}{6! \, n^{3} (1-\beta)^{6}}\right) \right)
\end{aligned}
\end{equation}
where $-\beta \, z(\mu/n,\beta) = W(- \beta \, e^{-\beta - \mu/n})$, $-\beta \, z(\lambda/n, \beta) = W(- \beta \, 
e^{- \beta - \lambda/n})$. In the limit as $n \to \infty$ it is evident that
\begin{align}\label{e22} 
\begin{split}
\lim_{n \to \infty} R_{n}^{(\beta)}(e^{-\mu t}; x) &= e^{-\mu x} \\
\lim_{n \to \infty} n \, \left[ R_{n}^{(\beta)}(e^{- \mu t}; x) - e^{- \mu x} \right] &= \frac{\mu (\mu - \lambda) \, x}{ 
2! \, (1-\beta)^{2}} \, e^{- \mu x}.
\end{split}
\end{align}
\end{lemma}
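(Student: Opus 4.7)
The plan is to follow the same strategy already used in the proof of Lemma \ref{L1}, but with a general exponential test function $e^{-\mu t}$ in place of $e^{-\lambda t}$, and then to unpack the resulting closed form as a series in $1/n$.

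First I would substitute $f(k/n) = e^{-\mu k/n}$ into the definition \eqref{e8}, obtaining
\begin{align*}
R_{n}^{(\beta)}(e^{-\mu t}; x) = n\alpha_{n}(x) \sum_{k=0}^{\infty} \frac{1}{k!} (n\alpha_{n}(x) + \beta k)^{k-1} e^{-(n\alpha_{n}(x) + \beta k)} e^{-\mu k/n}.
\end{align*}
The identity \eqref{e12} states that such a sum equals $e^{n\alpha_{n}(x) z}$ (up to the factor $e^{-n\alpha_n(x)}$ I pull out) whenever $z$ is chosen so that $\beta z - \ln z = \beta + \mu/n$. Solving that implicit equation via the Lambert W-function exactly as in \eqref{e13} yields $z = z(\mu/n,\beta)$, and combining the factor $e^{-n\alpha_n(x)}$ with $e^{n\alpha_n(x)z}$ produces the first closed form $R_{n}^{(\beta)}(e^{-\mu t}; x) = e^{n\alpha_{n}(x)(z(\mu/n,\beta) - 1)}$. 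Inserting the formula for $\alpha_{n}(x)$ from Lemma \ref{L1} then gives the second and third representations in \eqref{e20}.

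The heart of the argument, and the place where the most care is required, is the series expansion \eqref{e21}. Here I would expand $z(t,\beta) - 1$ in powers of $t$ near $t=0$ using the defining relation $\beta z - \ln z = \beta + t$. Writing $w = z - 1$ and rearranging gives $(1-\beta) w = -t + \tfrac{1}{2} w^{2} - \tfrac{1}{3} w^{3} + \cdots$, which is inverted by Lagrange inversion (or simply by iterating) to produce a power series $w(t) = -\tfrac{t}{1-\beta} + a_{2}(\beta)\, t^{2} + a_{3}(\beta) \, t^{3} + \cdots$. This is essentially the same expansion already used in the proof of Lemma \ref{L4} for the case $t=\lambda/n$. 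Applying it to both $z(\mu/n,\beta) - 1$ and $z(\lambda/n,\beta) - 1$, forming the ratio, multiplying by $-\lambda x$, and exponentiating, I collect terms order by order in $1/n$. The leading term of the ratio is $\mu/\lambda$, producing the factor $e^{-\mu x}$, and the subsequent corrections yield the coefficients displayed in \eqref{e21}. The main obstacle is purely bookkeeping: keeping track of the cross terms between the two expansions through order $1/n^{2}$ (and checking that the asymmetry in $\mu$ and $\lambda$ visible in \eqref{e21} really does arise from the ratio structure).

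Finally, the two limits in \eqref{e22} drop out of \eqref{e21}: the first by letting $n\to\infty$ directly, and the second by subtracting $e^{-\mu x}$, multiplying by $n$, and retaining only the $1/n$-term, whose coefficient is exactly $\mu(\mu-\lambda)x/(2!(1-\beta)^{2}) \cdot e^{-\mu x}$. This also provides a useful consistency check: setting $\mu = \lambda$ recovers $R_{n}^{(\beta)}(e^{-\lambda t}; x) = e^{-\lambda x}$, the preservation property \eqref{e9}, since the series in \eqref{e21} collapses to $1$.
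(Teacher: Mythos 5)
Your proposal is correct and follows essentially the same route as the paper: the closed form is obtained by comparing the weighted sum with \eqref{e12}--\eqref{e13} exactly as in Lemma \ref{L1}, and the expansion \eqref{e21} and limits \eqref{e22} come from expanding the ratio $\bigl(z(\mu/n,\beta)-1\bigr)/\bigl(z(\lambda/n,\beta)-1\bigr)$ in powers of $1/n$. The only cosmetic difference is that you generate the series for $z(t,\beta)-1$ by Lagrange inversion of $\beta z-\ln z=\beta+t$, whereas the paper invokes its precomputed appendix expansion \eqref{a5}; these are the same computation in substance.
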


\begin{proof}
It is fairly evident that
\begin{align*}
R_{n}^{(\beta)}(e^{-\mu t}; x) &= n \alpha_{n}(x) \, \sum_{k=0}^{\infty} \frac{1}{k!} \, (n \, \alpha_{n}(x) + \beta k)^{k-1} \,
e^{-n \, \alpha_{n}(x) - (\beta + \mu/n) k}
\end{align*}
which, by comparison to \eqref{e12}, leads to
\begin{align*}
R_{n}^{(\beta)}(e^{-\mu t}; x) = e^{- n \, \alpha_{n}(x) \, (z(\mu/n, \beta) - 1)} = Exp\left[- \lambda \, x \, 
\left(\frac{z(\mu/n,\beta)-1}{z(\lambda/n,\beta)-1}\right) \right].
\end{align*}
The expansion of \eqref{e20}, with use of \eqref{a5}, is given by
\begin{align*}
R_{n}^{(\beta)}(e^{- \mu t}; x) &= \sum_{k=0}^{\infty} \frac{(-\mu x)^{k}}{k!} \, \left(\frac{\lambda}{\mu} \, \frac{z(\mu/n, 
\beta) - 1}{z(\lambda/n, \beta) - 1} \right)^{k} \\
&= \sum_{k=0}^{\infty} \frac{(-\mu x)^{k}}{k!} \, \left(1 - \frac{k (\mu - \lambda)}{2! \, (1-\beta)^{2}} + k ( (3k + 1 + 
8 \beta) \mu \right. \\
& \hspace{5mm} \left. + (3k -1 - 8 \beta) \lambda ) \, \frac{\mu - \lambda}{4! \, (1-\beta)^{4}} + \mathcal{O}\left( 
\frac{\mu - \lambda}{6! \, (1-\beta)^{6}} \right) \right) \\
&= e^{- \mu x} \, \left( 1 + \frac{\mu (\mu - \lambda) x}{2! \, n (1-\beta)^{2}} + ( (3 \mu x - 4 - 8 \beta) \mu \right. \\
& \hspace{10mm} \left. - (3 \mu x - 2 + 8 \beta) \lambda) \, \frac{\mu (\mu - \lambda) x}{4! \, n^{2} (1-\beta)^{4}} 
+ \mathcal{O}\left(\frac{\mu (\mu - \lambda) x}{6! \, n^{3} (1-\beta)^{6}}\right) \right).
\end{align*}
Taking the appropriate limits yields the desired results.
\end{proof}

\begin{remark}
By use of Lemma 5 it may be stated that:
\begin{align}\label{e23} 
\lim_{n\to \infty} n^{2} \, R_{n}^{(\beta)}((e^{-t} - e^{-x})^{4}; x) = \frac{3 \, x^{2} \, e^{-4 x}}{(1 - \beta)^{4}}. 
\end{align}
\end{remark}


\begin{proof}
Since
\begin{align*}
R_{n}^{(\beta)}((e^{-t} - e^{-x})^{4};x) &= R_{n}^{(\beta)}(e^{-4 t}; x) - 4 \, e^{-x} \, R_{n}^{(\beta)}(e^{-3 t}; x) + 6 \, 
e^{-2 x} \, R_{n}^{(\beta)}(e^{-2 t}; x) \nonumber\\
& \hspace{10mm} - 4 \, e^{-3 x} \, R_{n}^{(\beta)}(e^{-t}; x) + e^{-4 x} \, R_{n}^{(\beta)}(1; x)
\end{align*}
then, by making use of \eqref{e21}, it becomes evident that
\begin{align*}
R_{n}^{(\beta)}((e^{-t} - e^{-x})^{4}; x) = \frac{3 \, x^{2} \, e^{-4 x}}{n^{2} \, (1-\beta)^{4}} + \mathcal{O}\left( 
\frac{1}{n^{3} \, (1-\beta)^{6}}
\right).
\end{align*}
Multiplying by $n^{2}$ and taking the limit as $n \to \infty$ yields the desired result.
\end{proof}


\vspace{5mm}
\section{Analysis}

\begin{theorem}
Given the sequence $A_{n} : C^{*}[0,\infty) \to C^{*}[0,\infty)$of positive linear operators which satisfies 
the conditions
\begin{align*}
\lim_{n \to \infty} A_{n}(e^{-k t}; x) = e^{-k x}, \hspace{5mm} k=0,1,2
\end{align*}
uniformly in $[0,\infty)$ then
\begin{align*}
\lim_{n \to \infty} A_{n}(f; x) = f(x)
\end{align*}
uniformly in $[0,\infty)$ for every $f \in C^{*}[0,\infty)$.
\end{theorem}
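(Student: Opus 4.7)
The plan is to mimic the classical Korovkin argument, but with the test triple $\{1,t,t^2\}$ replaced by $\{1, e^{-t}, e^{-2t}\}$. The underlying heuristic is that the change of variable $y = e^{-x}$ converts $C^{*}[0,\infty)$ (continuous functions with a finite limit at infinity) into $C[0,1]$ and sends the exponential test triple to the polynomial triple $\{1,y,y^{2}\}$; this will give uniform continuity and uniform boundedness for free.

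First I would fix $f \in C^{*}[0,\infty)$ and $\varepsilon > 0$. Since $\tilde f(y) := f(-\ln y)$ extends continuously to $[0,1]$ (with $\tilde f(0) = \lim_{t\to\infty} f(t)$), it is uniformly continuous there. Hence there exists $\delta > 0$ such that $|e^{-t}-e^{-x}| < \delta$ forces $|f(t)-f(x)| < \varepsilon$, uniformly in $t,x \geq 0$. On the complementary regime $|e^{-t}-e^{-x}| \geq \delta$, boundedness of $f$ yields
\begin{equation*}
|f(t)-f(x)| \leq 2\|f\|_{\infty} \leq \frac{2\|f\|_{\infty}}{\delta^{2}}\,(e^{-t}-e^{-x})^{2}.
\end{equation*}
Combining both regimes gives the master inequality
\begin{equation*}
|f(t)-f(x)| \leq \varepsilon + \frac{2\|f\|_{\infty}}{\delta^{2}}\,(e^{-t}-e^{-x})^{2}, \qquad t,x \geq 0.
\end{equation*}

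Next, I would apply $A_{n}$ using positivity and linearity, together with the triangle inequality, to obtain
\begin{align*}
|A_{n}(f;x)-f(x)| &\leq A_{n}(|f(t)-f(x)|;x) + |f(x)|\,|A_{n}(1;x)-1| \\
&\leq \varepsilon\, A_{n}(1;x) + \frac{2\|f\|_{\infty}}{\delta^{2}}\, A_{n}((e^{-t}-e^{-x})^{2};x) + \|f\|_{\infty}\,|A_{n}(1;x)-1|.
\end{align*}
Expanding $(e^{-t}-e^{-x})^{2} = e^{-2t} - 2e^{-x} e^{-t} + e^{-2x}$ and invoking the hypothesis for $k = 0,1,2$, each term $A_{n}(e^{-kt};x)-e^{-kx}$ tends to zero uniformly in $x$. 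Thus $A_{n}((e^{-t}-e^{-x})^{2};x) \to 0$ uniformly, and the right-hand side above is bounded by $\varepsilon + o(1)$ with $o(1)$ uniform in $x$. Since $\varepsilon$ was arbitrary, $A_{n}(f;x) \to f(x)$ uniformly on $[0,\infty)$.

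The main obstacle is making sure that the two-regime splitting based on $|e^{-t}-e^{-x}|$ produces estimates that are genuinely uniform in $x \in [0,\infty)$, rather than merely pointwise. This is precisely where the ambient space $C^{*}[0,\infty)$ is indispensable: the prescribed limit at infinity yields simultaneously the finite norm $\|f\|_{\infty}$ and the uniform modulus of continuity after the $y = e^{-x}$ compactification. Without this compactification step, neither constant in the master inequality would be independent of $x$, and the Korovkin-type conclusion would break down.
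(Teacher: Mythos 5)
Your proof is correct. Note that the paper does not actually prove this theorem: it defers to Altomare--Campiti, Boyanov--Veselinov and Holho\c{s}, so there is no in-paper argument to match line by line. What you give is the standard Boyanov--Veselinov/Korovkin compactification argument, and in substance it is the same machinery the paper does use to prove Theorem 2 (the quantitative Holho\c{s}-type estimate): your master inequality is \eqref{e24} with the modulus $\omega^{*}(f;\delta)$ replaced by the cruder splitting $\varepsilon + 2\|f\|_{\infty}\delta^{-2}(e^{-t}-e^{-x})^{2}$, and both the bound $|A_{n}(f;x)-f(x)|\le A_{n}(|f(t)-f(x)|;x)+|f(x)|\,|A_{n}(1;x)-1|$ and the expansion of $(e^{-t}-e^{-x})^{2}$ into the three test functions appear there as well. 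Your argument loses the explicit rate that the modulus-of-continuity version retains, but it is more elementary and fully proves the qualitative statement; the two points that needed care, uniformity of the $\varepsilon$--$\delta$ step (obtained from the compactification $y=e^{-x}$, which is exactly where $f\in C^{*}[0,\infty)$ is used) and the fact that the factors $e^{-x},e^{-2x}\le 1$ preserve uniformity in the test-function expansion, are both handled correctly.
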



The proof of this theorem 1 can be found in \cite{Alt-Camp,Boy-Ves,Holhos} and has, in essense, been demonstarted by \eqref{e21} for 
$\mu \geq 0$. An estimate of the rate of convergence for the SMJ operators will require the use of the modulus of continuity
\begin{align*}
\omega(F,\delta) = \Sup \, |F(t) - F(x)|
\end{align*}
and can be seen as, for the case where $F(e^{-t}) = f(t)$,
\begin{align*}
\omega^{*}(f;\delta) = \Sup_{|e^{-t} - e^{-x}| \leq \delta} \, |f(t) - f(x)|
\end{align*}
and is well defined for $\delta \geq 0$ and all functions $f \in C^{*}[0,\infty)$. In the present case the modulus of continuity
has the property
\begin{align}\label{e24} 
|f(t) - f(x)| \leq \left(1 + \frac{(e^{-x} - e^{-t})^{2}}{\delta^{2}}\right) \, \omega^{*}(f;\delta), \hspace{5mm} \delta > 0. 
\end{align}
Further properties and use of the modulus of continuity can be found in \cite{Boy-Ves, Holhos}. The following theorem can also 
be found in the later.

\begin{theorem}
If a sequence of positive linear operators $A_{n} : C^{*}[0,\infty) \to C^{*}[0,\infty)$ satisfy the equalities:
\begin{align*}
\|A_{n}(1 ;x) - 1 \|_{[0,\infty)} &= a_{n} \\
\|A_{n}(e^{-t};x) - e^{-x} \|_{[0,\infty)} &= b_{n} \\
\|A_{n}(e^{-2 t};x) - e^{-2 x} \|_{[0,\infty)} &= c_{n},
\end{align*}
where $a_{n}, b_{n}$ and $c_{n}$ are bounded and finite, in the limit $n \to \infty$, then
\begin{align*}
\| A_{n}(f; x) - f(x) \|_{[0, \infty)} \leq a_{n} \, |f(x)| + (2 + a_{n}) \, \omega^{*}(f, \sqrt{a_{n} + 2 \, b_{n} + c_{n}}),
\end{align*}
for every function $f \in C^{*}[0,\infty)$, and satisfies
\begin{align*}
\| A_{n}(f; x) - f(x) \|_{[0, \infty)} \leq 2 \, \omega^{*}(f, \sqrt{2 \, b_{n} + c_{n}})
\end{align*}
for constant preserving operators.
\end{theorem}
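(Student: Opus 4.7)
The plan is to follow the classical Shisha--Mond / Korovkin-type argument, but with the algebraic test functions $t^{j}$ replaced by the exponential ones $e^{-jt}$. The key ingredient is the modulus-of-continuity estimate \eqref{e24}, which plays exactly the role that the usual bound $|f(t)-f(x)|\le(1+(t-x)^{2}/\delta^{2})\,\omega(f;\delta)$ plays in the polynomial setting.

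First I would split
\[
A_{n}(f;x) - f(x) \;=\; A_{n}\!\bigl(f(t)-f(x);\,x\bigr) \;+\; f(x)\bigl(A_{n}(1;x)-1\bigr),
\]
apply the triangle inequality, and dispose of the second piece by $|f(x)|\,|A_{n}(1;x)-1|\le a_{n}\,|f(x)|$. This already produces the $a_{n}|f(x)|$ term of the first inequality, and shows why it is absent in the constant-preserving case. Next, using \eqref{e24} with a parameter $\delta>0$ to be chosen, positivity and linearity of $A_{n}$ give
\[
A_{n}\!\bigl(|f(t)-f(x)|;\,x\bigr) \;\le\; \omega^{*}(f;\delta)\left(A_{n}(1;x) + \frac{1}{\delta^{2}}\,A_{n}\!\bigl((e^{-t}-e^{-x})^{2};\,x\bigr)\right).
\]

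The heart of the argument is then the expansion
\[
A_{n}\!\bigl((e^{-t}-e^{-x})^{2};x\bigr) = \bigl[A_{n}(e^{-2t};x)-e^{-2x}\bigr] - 2e^{-x}\bigl[A_{n}(e^{-t};x)-e^{-x}\bigr] + e^{-2x}\bigl[A_{n}(1;x)-1\bigr],
\]
which, since $e^{-x},e^{-2x}\in[0,1]$ on $[0,\infty)$, immediately yields the sup-norm bound
\[
\bigl\|A_{n}((e^{-t}-e^{-x})^{2};x)\bigr\|_{[0,\infty)} \;\le\; c_{n} + 2b_{n} + a_{n}.
\]
Combining with $A_{n}(1;x)\le 1+a_{n}$ and choosing $\delta^{2}=a_{n}+2b_{n}+c_{n}$ makes the two terms inside the parenthesis balance to $(1+a_{n})+1 = 2+a_{n}$, which gives the first inequality.

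For the constant-preserving case, $a_{n}=0$ and $A_{n}(1;x)=1$, so the pointwise $|f(x)|$ term drops out entirely and the same calculation, now with $\delta^{2}=2b_{n}+c_{n}$, collapses the bracket to $1+1=2$, producing the second inequality. There is really no serious obstacle here; the only subtlety is bookkeeping—one must retain the factor $A_{n}(1;x)$ rather than silently replacing it by $1$, because it is precisely this factor that forces the extra $a_{n}$ inside the bracket and accounts for the difference between the two displayed bounds.
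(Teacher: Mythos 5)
Your proposal is correct and follows essentially the same route as the paper: the same decomposition of $A_{n}((e^{-t}-e^{-x})^{2};x)$ into the three test-function deviations, the same application of the modulus bound \eqref{e24} with $A_{n}(1;x)\le 1+a_{n}$, and the same choice $\delta=\sqrt{a_{n}+2b_{n}+c_{n}}$ (respectively $\sqrt{2b_{n}+c_{n}}$ when $a_{n}=0$). Your explicit remark that $e^{-x},e^{-2x}\in[0,1]$ justifies a step the paper leaves implicit, but the argument is otherwise identical.
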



\begin{proof}
Since
\begin{align*}
A_{n}((e^{-t} - e^{-x})^{2}; x) = [A_{n}(e^{-2 t}; x) - e^{-2 x}] - 2 \, e^{-x} \, [A_{n}(e^{-t}; x) - e^{-x}] + e^{-2x} 
\, [A_{n}(1;x) - 1]
\end{align*}
then, by use of \eqref{e24}, 
\begin{align*}
A_{n}(|f(t) - f(x)|; x) &\leq \left( A_{n}(1;x) + \frac{1}{\delta^{2}} \, A_{n}((e^{-t} - e^{-x})^{2}; x) \right) \, 
\omega^{*}(f,\delta) \\
&\leq \left( 1 + a_{n} + \frac{a_{n} + 2 \, b_{n} + c_{n}}{\delta^{2}} \right) \, \omega^{*}(f,\delta).
\end{align*}
By choosing $\delta = \sqrt{a_{n} + 2 \, b_{n} + c_{n}}$ then
\begin{align*}
A_{n}(|f(t) - f(x)|; x) &\leq ( 2 + a_{n}) \, \omega^{*}(f,\sqrt{a_{n} + 2 \, b_{n} + c_{n}}).
\end{align*}
Now, making use of
\begin{align*}
|A_{n}(f;x) - f(x)| \leq |f| \, |A_{n}(1;x) - 1| + A_{n}(|f(t) - f(x)|; x) 
\end{align*}
leads to the uniform estimation of convergence in the form
\begin{align*}
\| A_{n}(f; x) - f(x) \|_{[0, \infty)} \leq a_{n} \, |f(x)| + (2 + a_{n}) \, \omega^{*}(f, \sqrt{a_{n} + 2 \, b_{n} + c_{n}}).
\end{align*}
For constant preserving operators the property $\|A_{n}(1;x) - 1\|_{[0,\infty)} = a_{n} = 0$ holds and leads to
\begin{align*}
\| A_{n}(f; x) - f(x) \|_{[0, \infty)} \leq 2 \, \omega^{*}(f, \sqrt{2 \, b_{n} + c_{n}}).
\end{align*}
\end{proof}


\begin{remark}
The SMJ operators satisfy
\begin{align*}
\| R_{n}^{(\beta)}(f; x) - f(x) \|_{[0, \infty)} \leq 2 \, \omega^{*}(f, \sqrt{2 \, b_{n} + c_{n}}).
\end{align*}
\end{remark}


\begin{proof}
By using Lemma 2 it is evident that $R_{n}^{(\beta)}(1; x) = 1$ and yields $a_{n} = 0$. By using \eqref{e21}, of 
Lemma 5, it is seen that
\begin{align*}
R_{n}^{(\beta)}(e^{-\mu \, t}; x) - e^{-\mu x} &= e^{-\mu \, x} \, \left( \frac{\mu (\mu - \lambda)x}{2! \, n \, (1-\beta)^{2}} 
- \frac{\Lambda(x, \mu, \lambda) \, \mu (\mu - \lambda) x}{4! \, n^{2} \, (1-\beta)^{4}} + \mathcal{O}\left( \frac{1}{n^{3} \, 
(1-\beta)^{6}}
\right) \right),
\end{align*}
where $\Lambda(x, \mu, \lambda) = (3 \mu x - 4 - 8 \beta) \, \mu - (3 \mu x - 2 + 8 \beta) \lambda$, and provides
\begin{align*}
\| R_{n}^{(\beta)}(e^{-\mu \, t}; x) - e^{-\mu x} \| = \left\| \frac{\mu (\mu - \lambda) \, x \, e^{-\mu \, x}}{
2! \, n (1-\beta)^{2}} \, \left( 1 + \frac{2 \, \Lambda(x, \mu, \lambda)}{4! \, n \, (1-\beta)^{2}} + \mathcal{O}\left( 
\frac{1}{n^{2} \, (1-\beta)^{4}} \right) \right) \right\|
\end{align*}
which, for $\mu \in \{1,2\}$, the remaining limiting values, $b_{n}$ and $c_{n}$ can be seen to be bounded and 
finite. It is also evident that in the limiting case, $n \to \infty$, $b_{n}$ and $c_{n}$ tend to zero. By the 
resulting statements of Theorem 2 it is determined that
\begin{align*}
\| R_{n}^{(\beta)}(f; x) - f(x) \|_{[0, \infty)} \leq 2 \, \omega^{*}(f, \sqrt{2 \, b_{n} + c_{n}}).
\end{align*}
as claimed.
\end{proof}



For the SMJ operators a quantitative Voronovskaya-type theorem can be defined in the following way.
\begin{theorem}
Let $f, f', f'' \in C^{*}[0,\infty)$ then
\begin{align*}
&\left| n \, [R_{n}^{(\beta)}(f;x) - f(x)] - \frac{\lambda \, x}{2! \, (1-\beta)^{2}} \, f'(x) - \frac{x}{n \, (1-\beta)^{2}}
\, f''(x) \right| \\
& \hspace{5mm} \leq |\mu_{n}(x,\beta)| \, |f'(x)|+ |\nu_{n}(x,\beta)| \, |f''(x)| \\
& \hspace{15mm} + 2 \, (2 \, \nu_{n}(x,\beta) + \frac{x}
{(1-\beta)^{2}} + \zeta_{n}(x,\beta) ) \, \omega^{*}\left(f''; \frac{1}{\sqrt{n}}\right)
\end{align*}
where
\begin{align*}
\mu_{n}(x, \beta) &= n \, R_{n}^{(\beta)}(\phi; x) - \frac{\lambda \, x}{2! \, (1-\beta)^{2}} \\
\nu_{n}(x, \beta) &= \frac{1}{2!} \, \left(n \, R_{n}^{(\beta)}(\phi^{2}; x) - \frac{x}{(1-\beta)^{2}} \right) \\
\zeta_{n}(x, \beta) &= n^{2} \, \sqrt{R_{n}^{(\beta)}((e^{-x} - e^{-t})^{4}; x)} \, \sqrt{R_{n}^{(\beta)}(\phi^{4}; x)}.
\end{align*}
\end{theorem}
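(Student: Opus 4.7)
Since $f, f', f'' \in C^{*}[0,\infty)$, I would begin with the second-order Taylor expansion about $x$,
$$f(t) = f(x) + f'(x)(t-x) + \frac{1}{2}f''(x)(t-x)^{2} + \frac{1}{2}\bigl(f''(\eta) - f''(x)\bigr)(t-x)^{2},$$
where $\eta = \eta(t,x)$ lies between $x$ and $t$. Applying the positive linear operator $R_{n}^{(\beta)}(\cdot;x)$ and using $R_{n}^{(\beta)}(1;x)=1$ from Lemma~\ref{L2}, together with the notation $\phi = t-x$, gives
$$R_{n}^{(\beta)}(f;x) - f(x) = f'(x)\,R_{n}^{(\beta)}(\phi;x) + \frac{1}{2}f''(x)\,R_{n}^{(\beta)}(\phi^{2};x) + E_{n}(x),$$
with remainder $E_{n}(x) := \frac{1}{2}R_{n}^{(\beta)}\bigl((f''(\eta)-f''(x))\phi^{2};x\bigr)$. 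Multiplying through by $n$ and invoking the definitions $nR_{n}^{(\beta)}(\phi;x) = \mu_{n}(x,\beta) + \lambda x/(2!(1-\beta)^{2})$ and $nR_{n}^{(\beta)}(\phi^{2};x) = 2\nu_{n}(x,\beta) + x/(1-\beta)^{2}$, subtracting the two target terms, and applying the triangle inequality produces the contributions $|\mu_{n}(x,\beta)||f'(x)|$ and $|\nu_{n}(x,\beta)||f''(x)|$ that appear in the claimed bound.

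The substantive step is the estimation of $n|E_{n}(x)|$. For this I would apply property \eqref{e24} of $\omega^{*}$ to $f''$ with parameter $\delta > 0$,
$$|f''(\eta) - f''(x)| \leq \Bigl(1 + \frac{(e^{-\eta}-e^{-x})^{2}}{\delta^{2}}\Bigr)\omega^{*}(f'';\delta) \leq \Bigl(1 + \frac{(e^{-t}-e^{-x})^{2}}{\delta^{2}}\Bigr)\omega^{*}(f'';\delta),$$
the second inequality holding because $\eta$ lies between $x$ and $t$ and $s\mapsto e^{-s}$ is monotone. Substituting this into $E_{n}$ and using positivity of $R_{n}^{(\beta)}$ yields
$$|E_{n}(x)| \leq \frac{1}{2}\omega^{*}(f'';\delta)\Bigl[R_{n}^{(\beta)}(\phi^{2};x) + \frac{1}{\delta^{2}}\,R_{n}^{(\beta)}\bigl((e^{-t}-e^{-x})^{2}\phi^{2};x\bigr)\Bigr].$$
The mixed moment I would then control by the Cauchy--Schwarz inequality for the positive functional $R_{n}^{(\beta)}$,
$$R_{n}^{(\beta)}\bigl((e^{-t}-e^{-x})^{2}\phi^{2};x\bigr) \leq \sqrt{R_{n}^{(\beta)}((e^{-t}-e^{-x})^{4};x)}\,\sqrt{R_{n}^{(\beta)}(\phi^{4};x)}.$$
Choosing $\delta = 1/\sqrt{n}$ turns $1/\delta^{2}$ into $n$, so that after multiplication of $|E_{n}(x)|$ by $n$ the second summand becomes precisely $\zeta_{n}(x,\beta)$, while the first contributes $2\nu_{n}(x,\beta) + x/(1-\beta)^{2}$. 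Gathering the pieces delivers the $\omega^{*}\bigl(f'';1/\sqrt{n}\bigr)$-part of the claimed inequality.

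The main technical obstacle is reconciling the two incompatible "distances" that are forced on the argument: the polynomial central moments $R_{n}^{(\beta)}(\phi^{k};x)$ that come out of Taylor's theorem, and the exponential-type distance $|e^{-t}-e^{-x}|$ built into the weighted modulus $\omega^{*}$. The Cauchy--Schwarz step above is precisely the device that bridges these two worlds and is the source of the mixed quantity $\zeta_{n}$; once this splitting is in place, the remainder of the proof is routine bookkeeping using the moment identities recorded in Lemmas~\ref{L2}--\ref{L4} and the fourth-order exponential moment estimate \eqref{e23}.
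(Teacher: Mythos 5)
Your proposal follows essentially the same route as the paper's own proof: second-order Taylor expansion with the $\theta(t,x)\,\phi^{2}$ remainder, the triangle inequality producing the $|\mu_{n}|\,|f'|$ and $|\nu_{n}|\,|f''|$ terms, the weighted-modulus property \eqref{e24} applied to $f''$, the Cauchy--Schwarz inequality to create $\zeta_{n}(x,\beta)$, and the choice $\delta = 1/\sqrt{n}$. Your monotonicity argument bounding $|f''(\eta)-f''(x)|$ directly even yields the constant $\tfrac{1}{2}$ where the paper's case analysis gives $2$, so your bound is slightly sharper and, since $2\nu_{n} + x/(1-\beta)^{2} = n\,R_{n}^{(\beta)}(\phi^{2};x) \geq 0$, it implies the stated inequality.
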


\begin{proof}
The Taylor expansion for the function $f(x)$ is seen by
\begin{align}\label{e25} 
f(t) = f(x) + f'(x) \, (t-x) + \frac{f''(x)}{2!} \, (t-x)^{2} + \theta(t,x) \, (t-x)^{2} 
\end{align}
where $2 \, \theta(t,x) = f''(\eta) - f''(x)$ for $x \leq \eta \leq t$. Applying the SMJ operator to the Taylor
expansion it is determined that
\begin{align*}
& |R_{n}^{(\beta)}(f(t);x) - f(x) \, R_{n}^{(\beta)}(1;x) - f'(x) \, R_{n}^{(\beta)}(\phi;x) - \frac{f''(x)}{2!} 
\, R_{n}^{(\beta)}(\phi^{2};x)|  \\
& \hspace{30mm}\leq |R_{n}^{(\beta)}(\theta(t,x) \, \phi^{2};x)|.
\end{align*}
Using the results of lemma 4 and 5 this can be seen by
\begin{align*}
& \left|n \, \left( R_{n}^{(\beta)}(f;x) - f(x) \right) - \frac{\lambda \, x}{2! \, (1-\beta)^{2}} \, f'(x) - \frac{x}{2! \, 
(1-\beta)^{2}} \, f''(x) \right|  \\
& \hspace{10mm} \leq \left| n \, R_{n}^{(\beta)}(\phi;x) - \frac{\lambda \, x}{2! \, (1-\beta)^{2}} \right| \, |f'(x)|+ \frac{1}{2!} 
\, \left| n \, R_{n}^{(\beta)}(\phi^{2};x) - \frac{x}{(1-\beta)^{2}} \right| \, |f''(x)| \\
& \hspace{20mm} + |n \, R_{n}^{(\beta)}(\theta(t,x) \, \phi^{2}; x)|
\end{align*}
or
\begin{align*}
& \left|n \, \left( R_{n}^{(\beta)}(f;x) - f(x) \right) - \frac{\lambda \, x}{2! \, (1-\beta)^{2}} \, f'(x) - \frac{x}{2! \, 
(1-\beta)^{2}} \, f''(x) \right|  \\
& \hspace{10mm} \leq |\mu_{n}(x,\beta)| \, |f'(x)|+ |\nu_{n}(x,\beta)| \, |f''(x)| + |n \, R_{n}(\theta(t,x) \, \phi^{2}; x)|
\end{align*}
where
\begin{align*}
\mu_{n}(x, \beta) &= n \, R_{n}^{(\beta)}(\phi; x) - \frac{\lambda \, x}{2! \, (1-\beta)^{2}} \\
\nu_{n}(x, \beta) &= \frac{1}{2!} \, \left(n \, R_{n}^{(\beta)}(\phi^{2}; x) - \frac{x}{(1-\beta)^{2}} \right).
\end{align*}

By using \eqref{e22} it is given that
\begin{align*}
|\theta(t,x)| \leq \left( 1 + \frac{(e^{-t} - e^{-x})^{2}}{\delta^{2}} \right) \, \omega^{*}(f''; \delta)
\end{align*}
which becomes, when $|e^{-t} - e^{-x}| \leq \delta$ is taken into consideration, $|\theta(t,x)| \leq 2 \, \omega^{*}
(f''; \delta)$. If $|e^{-t} - e^{-x}| > \delta$ then $|\theta(t,x)| \leq (2/\delta^{2}) \, (e^{-t} - e^{-x})^{2} \, 
\omega^{*}(f''; \delta)$. Therefore, it can be concluded that
\begin{align*}
|\theta(t,x)| \leq 2 \, \left( 1 + \frac{(e^{-t} - e^{-x})^{2}}{\delta^{2}} \right) \, \omega^{*}(f''; \delta).
\end{align*}
The term $n \, R_{n}^{(\beta)}(\theta(t,x) \, \phi^{2}; x)$ becomes
\begin{align*}
n \, R_{n}^{(\beta)}(\theta(t,x) \, \phi^{2}; x) \leq 2 \, n \, \left( R_{n}^{(\beta)}(\phi^{2}; x) + \frac{1}{\delta^{2}} \, 
R_{n}^{(\beta)}((e^{-t} - e^{-x})^{2} \, \phi^{2}; x) \right) \, \omega^{*}(f''; \delta)
\end{align*}
which, by applying the Cauchy-Swarz inequality, becomes
\begin{align*}
n \, R_{n}^{(\beta)}(\theta(t,x) \, \phi^{2}; x) \leq 2 \, n \, \left( R_{n}^{(\beta)}(\phi^{2}; x) + \frac{1}{\delta^{2}} \, 
\zeta_{n}(x, \beta) \right) \, \omega^{*}(f''; \delta),
\end{align*}
where
\begin{align*}
\zeta_{n}(x, \beta) &= n^{2} \, \sqrt{R_{n}^{(\beta)}((e^{-x} - e^{-t})^{4}; x)} \, \sqrt{R_{n}^{(\beta)}(\phi^{4}; x)}.
\end{align*}
Now, by choosing $\delta = 1/\sqrt{n}$, the desired result is obtained. 
\end{proof}

\begin{remark}
By use of Lemma 4 it is clear that $\mu_{n}(x, \beta) \to 0$ and $\nu_{n}(x,\beta) \to 0$ as $n \to \infty$.
Using \eqref{e19} and \eqref{e23} the limit of $\zeta_{n}(x,\beta)$ becomes
\begin{align*}
\lim_{n \to \infty} \zeta_{n}(x,\beta) = \frac{3 \, x^{2} \, e^{-2 x}}{(1-\beta)^{4}}
\end{align*}
and yields
\begin{align*}
\lim_{n \to \infty} \left( 2 \, \nu_{n}(x,\beta) + \frac{x}{(1-\beta)^{2}} + \zeta_{n}(x,\beta) \right) = 
\frac{x}{(1-\beta)^{2}} + \frac{3 \, x^{2} \, e^{-2 x}}{(1-\beta)^{4}}.
\end{align*}

\end{remark}


\begin{corollary}
Let $f, f', f'' \in C^{*}[0, \infty)$ then the inequality
\begin{align*}
\lim_{n \to \infty} \, n \, \left| R_{n}^{(\beta)}(f;x) - f(x) \right| = \frac{\lambda \, x}{2! \, (1-\beta)^{2}} \, f'(x)
 + \frac{x}{(1-\beta)^{2}} \, f''(x)
\end{align*}
holds for all $x \in [0, \infty)$.
\end{corollary}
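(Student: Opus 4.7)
The plan is to deduce the corollary directly from Theorem 3 by letting $n \to \infty$ in the quantitative estimate it provides. Rewriting the inequality of Theorem 3 in the form
\begin{align*}
& \left| n \, [R_{n}^{(\beta)}(f;x) - f(x)] - \frac{\lambda \, x}{2! \, (1-\beta)^{2}} \, f'(x) - \frac{x}{n \, (1-\beta)^{2}} \, f''(x) \right| \\
& \hspace{5mm} \leq |\mu_{n}(x,\beta)| \, |f'(x)| + |\nu_{n}(x,\beta)| \, |f''(x)| + 2 \left( 2\nu_{n}(x,\beta) + \frac{x}{(1-\beta)^{2}} + \zeta_{n}(x,\beta) \right) \omega^{*}\!\left(f'';\tfrac{1}{\sqrt{n}}\right),
\end{align*}
I would show that every term on the right-hand side tends to zero.

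First, by Lemma 4 (see also the Remark preceding the Corollary), $\mu_{n}(x,\beta)\to 0$ and $\nu_{n}(x,\beta)\to 0$ as $n\to\infty$, which kills the first two contributions since $|f'(x)|$ and $|f''(x)|$ are finite for each fixed $x$. Next, the Remark computes $\lim_{n\to\infty}\zeta_{n}(x,\beta) = 3x^{2}e^{-2x}/(1-\beta)^{4}$, so the prefactor $2(2\nu_{n} + x/(1-\beta)^{2} + \zeta_{n})$ of the modulus of continuity remains bounded. Since $f'' \in C^{*}[0,\infty)$, the function $F$ defined by $F(e^{-t}) = f''(t)$ is uniformly continuous on $[0,1]$, hence $\omega^{*}(f'';\delta)\to 0$ as $\delta \to 0^{+}$. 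Taking $\delta = 1/\sqrt{n}$ therefore drives the last term to zero as well.

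Combining these facts, the entire right-hand side vanishes in the limit, so
$$\lim_{n\to\infty} \left( n [R_{n}^{(\beta)}(f;x) - f(x)] - \frac{\lambda x}{2!(1-\beta)^{2}} f'(x) - \frac{x}{n(1-\beta)^{2}} f''(x) \right) = 0.$$
Since the residual $f''$ term carries an extra $1/n$ and thus contributes only to the $f''$-coefficient scaling — being passed through the limit via $\lim n R_{n}^{(\beta)}(\phi^{2};x) = x/(1-\beta)^{2}$ from Lemma 4 applied inside the Taylor expansion — one recovers the stated limit. The only mild subtlety I anticipate is keeping careful track of the absolute value in the statement: the natural Voronovskaya limit is signed, and the claim should be read with the understanding that it asserts convergence of $n[R_{n}^{(\beta)}(f;x) - f(x)]$ to the indicated expression, from which the absolute-value version then follows. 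Apart from this bookkeeping, the argument is a direct limiting passage in the inequality of Theorem 3, so no genuine obstacle is expected.
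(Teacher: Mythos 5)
Your overall route --- letting $n \to \infty$ in the inequality of Theorem 3, using Lemma 4 and the Remark to get $\mu_{n}(x,\beta)\to 0$, $\nu_{n}(x,\beta)\to 0$ and $\zeta_{n}(x,\beta)$ bounded, and observing that $\omega^{*}(f'';1/\sqrt{n})\to 0$ since $f''\in C^{*}[0,\infty)$ --- is exactly how the paper intends the corollary to follow; no separate proof is given there, so in spirit you are on the intended path.

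The genuine gap is in your handling of the $f''$ term, which is precisely the step you wave through with ``one recovers the stated limit.'' If you pass to the limit literally in the displayed inequality of Theorem 3, the subtracted term $\frac{x}{n\,(1-\beta)^{2}}\,f''(x)$ carries an extra $1/n$ and simply vanishes, so this argument yields only $\lim_{n\to\infty} n\,[R_{n}^{(\beta)}(f;x)-f(x)] = \frac{\lambda x}{2\,(1-\beta)^{2}}\,f'(x)$: the $f''$ contribution is lost, not ``passed through the limit.'' To recover it you must return to the form actually established inside the proof of Theorem 3, where the quantity subtracted is $\frac{f''(x)}{2}\, n\, R_{n}^{(\beta)}(\phi^{2};x)$, and only then apply Lemma 4, which gives this term the limit $\frac{x}{2\,(1-\beta)^{2}}\,f''(x)$ --- note the factor $\frac{1}{2}$ inherited from the Taylor remainder. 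What this careful passage produces is therefore $\frac{\lambda x}{2\,(1-\beta)^{2}}\,f'(x)+\frac{x}{2\,(1-\beta)^{2}}\,f''(x)$, which does not match the coefficient $\frac{x}{(1-\beta)^{2}}$ claimed in the corollary (the statement of Theorem 3 is itself inconsistent with its own proof on this point). So you must either track this factor of $2$ explicitly and flag the discrepancy with the stated coefficient, or your derivation does not actually reach the claimed identity. Your observation about the absolute value (that the limit should be read for the signed quantity $n[R_{n}^{(\beta)}(f;x)-f(x)]$) is correct, but it is the lesser of the two issues.
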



\vspace{5mm}
\section{Further Considerations}

Having established several results for the Sz\'{a}sz-Mirakyan-Jain operators further considerations can be considered. One 
such consideration could be an application of a theorem found in a recent work of Gupta and Tachev, \cite{GT}. In order to
do so the following results are required. 
\begin{lemma} \label{L6} 
Let $z_{\mu} = z(\mu/n, \beta)$, $\phi = t-x$, and $f = \text{Exp}[n \, \alpha_{n}(x) \, (z_{\mu} - 1)]$. The exponentially
weighted moments are then given by:
\begin{align}\label{e51} 
\begin{split}
R_{n}^{(\beta)}(e^{- \mu x} \, \phi^{0}; x) &= f \\
R_{n}^{(\beta)}(e^{- \mu x} \, \phi^{1}; x) &= \left[ \frac{\alpha_{n}(x) \, z_{\mu}}{1-\beta \, z_{\mu}} - x \right] \, f  \\
R_{n}^{(\beta)}(e^{- \mu x} \, \phi^{2}; x) &= \left[ \left(\frac{\alpha_{n}(x) \, z_{\mu}}{1-\beta \, z_{\mu}}
- x \right)^{2} + \frac{\alpha_{n}(x) \, z_{\mu}}{n \, (1-\beta \, z_{\mu})^{3}} \right] \, f \\
R_{n}^{(\beta)}(e^{- \mu x} \, \phi^{3}; x) &= \left[ \left(\frac{\alpha_{n}(x) \, z_{\mu}}{1-\beta \, z_{\mu}}
- x \right)^{3} + \frac{3 \, \alpha_{n}(x) \, z_{\mu}}{n \, (1-\beta \, z_{\mu})^{3}}
\, \left(\frac{\alpha_{n}(x) \, z_{\mu}}{1-\beta \, z_{\mu}} - x \right) \right. \\
& \hspace{15mm}  \left. + (1 + 2 \, \beta \, z_{\mu}) \, \frac{\alpha_{n}(x) \, z_{\mu}}{n^{2} \, (1-\beta \, z_{\mu})^{5}} 
\right] \, f \\
R_{n}^{(\beta)}(e^{- \mu t} \, \phi^{4}; x) &= \left[ \left(\frac{\alpha_{n}(x) \, z_{\mu}}{1-\beta \, z_{\mu}} -x \right)^{4} + 
\frac{6 \, 
\alpha_{n}(x) \, z_{\mu}}{n \, (1-\beta \, z_{\mu})^{3}} \, \left(\frac{\alpha_{n}(x) \, z_{\mu}}{1-\beta \, z_{\mu}} - x \right)^{2} 
\right. \\ 
& \hspace{5mm} \left. + (7 + 8 \, \beta \, z_{\mu}) \, \frac{\alpha_{n}(x) \, z_{\mu}}{n^{2} \, (1-\beta \, z_{\mu})^{5}} 
 \cdot \left(\frac{\alpha_{n}(x) \, z_{\mu}}{1-\beta \, z_{\mu}} - x \right) + (1 + 8 \beta \, z_{\mu} + 6 \beta^{2} 
\, z_{\mu}) \right. \\
& \hspace{15mm} \left. \cdot \frac{\alpha_{n}(x) \, z_{\mu}}{n^{3} \, (1-\beta \, z_{\mu})^{7}} + \frac{3 \, \alpha_{n}(x) \, 
z_{\mu}}{n^{2} \, (1-\beta \, z_{\mu})^{5}} \right] \, f
\end{split}
\end{align}
\end{lemma}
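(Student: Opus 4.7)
The approach I would take exploits a structural tilting identity for the Jain basis: when multiplied by the weight $e^{-\mu k/n}$, the basis becomes a Jain basis with shifted parameters. Using the defining relation $\beta z_\mu - \ln z_\mu = \beta + \mu/n$ from Lemma \ref{L1}, one has $e^{-(\beta + \mu/n)k} = z_\mu^k e^{-\beta z_\mu k}$, and combining this with the algebraic identity $z_\mu^k (n\alpha_n(x) + \beta k)^{k-1} = z_\mu \cdot (n\alpha_n(x) z_\mu + \beta z_\mu k)^{k-1}$ yields
\begin{equation*}
L_{n,k}^{(\beta)}(\alpha_n(x)) \cdot e^{-\mu k/n} = f \cdot L_{n,k}^{(\beta z_\mu)}(\alpha_n(x) z_\mu),
\end{equation*}
where $f = \exp[n\alpha_n(x)(z_\mu - 1)]$ absorbs the residual exponential factor $e^{-n\alpha_n(x) + n\alpha_n(x) z_\mu}$. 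This reweighting identity is my first and most important step; its verification is a direct manipulation of exponents and the base-$(k-1)$ power.

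Once the identity is established, summing against any function $g$ gives
\begin{equation*}
R_n^{(\beta)}(e^{-\mu t} g(t); x) = f \cdot \sum_{k=0}^{\infty} L_{n,k}^{(\beta z_\mu)}(\alpha_n(x) z_\mu) \, g(k/n),
\end{equation*}
and the sum on the right is structurally an SMJ/Jain operator acting on $g$, but with $\alpha_n(x)$ replaced by $\alpha_n(x) z_\mu$ and $\beta$ replaced by $\beta z_\mu$. Choosing $g(t) = (t-x)^m$, I would invoke Lemma \ref{L3}, whose formulas depend on $\alpha_n$ and $\beta$ only as parameters fed into the binomial expansion of Lemma \ref{L2}. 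Hence the expressions in Lemma \ref{L3} apply verbatim under the substitution $\alpha_n(x) \mapsto \alpha_n(x) z_\mu$ and $\beta \mapsto \beta z_\mu$, and multiplication of each resulting expression by $f$ produces the five claimed formulas for $m = 0, 1, 2, 3, 4$.

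The main obstacle is isolating the tilting identity cleanly, after which each case reduces to mechanical pattern-matching against Lemma \ref{L3}. As an independent check, one could instead start from $R_n^{(\beta)}(e^{-\mu t} t^j; x) = (-1)^j \partial_\mu^j f$ and compute the successive derivatives of $f$ via the implicit derivative $\partial_\mu z_\mu = -z_\mu/[n(1-\beta z_\mu)]$ obtained from $\beta z_\mu - \ln z_\mu = \beta + \mu/n$, then fold the results through the binomial expansion of $\phi^m$. This alternative route requires derivatives of $f$ up to fourth order and is notationally heavy, while the tilting approach sidesteps all such bookkeeping by transferring the higher-order structure directly from Lemma \ref{L3}.
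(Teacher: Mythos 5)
Your proposal is correct, but it takes a genuinely different route from the paper. The paper's proof writes the weighted moment as a differential operator acting on the generating exponential,
\begin{align*}
R_{n}^{(\beta)}(e^{-\mu t}\,\phi^{m};x) = (-1)^{m}\left(\frac{d}{d\mu}+x\right)^{m} e^{n\,\alpha_{n}(x)\,(z_{\mu}-1)},
\end{align*}
computes the case $m=1$ using the implicit derivative $\frac{dz_{\mu}}{d\mu} = -\frac{z_{\mu}}{n\,(1-\beta z_{\mu})}$, and asserts that the remaining cases follow; this is exactly the route you list only as an independent check. Your primary argument, the exponential tilting identity $L_{n,k}^{(\beta)}(\alpha_{n}(x))\,e^{-\mu k/n} = f\cdot L_{n,k}^{(\beta z_{\mu})}(\alpha_{n}(x)\,z_{\mu})$, which follows from $z_{\mu}e^{-\beta z_{\mu}} = e^{-\beta-\mu/n}$ and $z_{\mu}^{k}(n\alpha_{n}+\beta k)^{k-1} = z_{\mu}(n\alpha_{n}z_{\mu}+\beta z_{\mu}k)^{k-1}$, is valid and is arguably cleaner: it shows the weighted operator is $f$ times an SMJ/Jain operator with the parameter substitution $\alpha_{n}(x)\mapsto\alpha_{n}(x)z_{\mu}$, $\beta\mapsto\beta z_{\mu}$ (note $0<\beta z_{\mu}\le\beta<1$ for $\mu\ge 0$, so the Jain moment identities of Lemmas \ref{L2}--\ref{L3} apply), and all five formulas drop out by substitution with no fourth-order differentiation. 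The paper's route is more compact to state but leaves the heavy $m=3,4$ bookkeeping implicit; yours transfers that structure wholesale from Lemma \ref{L3}. Incidentally, your substitution produces $(1+8\beta z_{\mu}+6\beta^{2}z_{\mu}^{2})$ in the $\phi^{4}$ formula, indicating that the factor $(1+8\beta z_{\mu}+6\beta^{2}z_{\mu})$ in the stated lemma is a typographical slip, and the weight in the first four displayed lines should read $e^{-\mu t}$ rather than $e^{-\mu x}$, as your proof (and the paper's) actually treats it.
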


\begin{proof}
By using \eqref{e14} then
\begin{align*}
R_{n}^{(\beta)}(e^{- \mu t} \phi^{m}; x) &= n \, \alpha_{n} \, \sum_{k=0}^{\infty} \frac{1}{k!} \, (n \alpha_{n} + \beta k)^{k-1}
\, e^{-(n \alpha_{n} + \beta k)} \, e^{- \mu k/n} \, \left(\frac{k}{n} - x\right)^{m} \\
&= (-1)^{m} \, \left(\frac{d}{d \mu } + x\right)^{m}  \, e^{n \alpha_{n}(x) \, (z_{\mu} - 1)}.
\end{align*}
For the case $m =1$ it is given that
\begin{align*}
R_{n}^{(\beta)}(e^{- \mu t} \, \phi; x) &= - \left(\frac{d}{d\mu} + x\right) \, e^{n \alpha_{n}(x) \, (z_{\mu} - 1)} 
= \left[ \frac{\alpha_{n}(x) \, z_{\mu}}{1 - \beta \, z_{\mu}} - x \right] \, e^{n \alpha_{n}(x) \, (z_{\mu} - 1)}.
\end{align*}
The remainder of the moments follow.
\end{proof}

\begin{remark} \label{R6}
The ratio of $R_{n}^{(\beta)}(e^{- \mu t} \, \phi^{4}; x)$ and $R_{n}^{(\beta)}(e^{- \mu t} \, \phi^{2}; x)$ as $n \to \infty$ 
is
\begin{align}
\lim_{n \to \infty}  \frac{R_{n}^{(\beta)}(e^{- \mu t} \, \phi^{4}; x)}{R_{n}^{(\beta)}(e^{- \mu t} \, \phi^{2}; x)} = 0,
\end{align}
with order of convergence $\mathcal{O}(n^{-2})$.
\end{remark}


\begin{proof}
Consider the expansion of
\begin{align*}
\frac{\alpha_{n}(x) \, z_{\mu}}{1- \beta \, z_{\mu}} = z_{\mu} \cdot \frac{1-\beta}{1 - \beta \, z_{\mu}} \cdot \frac{\alpha_{n}(x)}
{1-\beta}
\end{align*}
by making use of the expansion used in the proof of Lemma \ref{L4}, \eqref{a3}, and by
\begin{align*}
\frac{1-\beta}{1-\beta \, z_{\mu}} = 1 - \frac{\beta \, \mu}{n(1-\beta)^{2}} + \frac{3 \, \beta^{2} \, \mu^{2}}{2! \, n^{2} (1-
\beta)^{4}} - \frac{(\beta + 14 \beta^2) \, \mu^{3}}{3! \, n^{3} (1-\beta)^{6}}  + \mathcal{O}\left(\frac{\mu^{4}}{n^{4} (1-
\beta)^{8}}\right)
\end{align*}
then 
\begin{align}
\frac{\alpha_{n}(x) \, z_{\mu}}{1 - \beta \, z_{\mu}} - x = \frac{x}{2 \, n (1-\beta)^{2}} \, \left( (\lambda - 2 \mu) + 
\frac{\sigma(\lambda, \mu)}{3! \, n (1-\beta)^{2}} + \mathcal{O}\left(\frac{1}{n^{2} (1-\beta)^{4}}\right) \right).
\end{align}
where $\sigma(\lambda, \mu) = (1-4\beta) \lambda - 6 \lambda \mu + 6(1-2\beta + 3 \beta^{2}) \mu^{2}$. By squaring this result
and taking the limit it is determined that 
\begin{align*}
\lim_{n \to \infty}  \frac{R_{n}^{(\beta)}(e^{- \mu t} \, \phi^{4}; x)}{R_{n}^{(\beta)}(e^{- \mu t} \, \phi^{2}; x)} = 
\lim_{n \to \infty} \frac{(\lambda - 2 \mu)^{2} \, x^{2}}{4 \, n^{2} (1-\beta)^{4}} \, \left( 1 + \mathcal{O}\left( \frac{1}{
n} \right) \right) \to 0.
\end{align*}
\end{proof}


With Lemma \ref{L6} and Remark \ref{R6} use could be made of Theorem 5 of Gupta and Tachev, \cite{GT}, which can be stated 
as
\begin{theorem}
Let $E$ be a subspace of $C[0,\infty)$ which contains the polynomials and suppose $L_{n} : E \to C[0,\infty)$ is a 
sequence of linear positive operators preserving linear functions. Suppose that for each constant $\mu > 0$, and fixed
$x \in [0, \infty)$, the operators $L_{n}$ satisfy
\begin{align*}
L_{n}( e^{- \mu t} \, (t-x)^{2}; x) \leq Q(\mu, x) \, R_{n}^{(\beta)}(e^{- \mu t} (t-x)^{2}; x).
\end{align*}
Additionally, if $f \in C^{2}[0, \infty) \bigcap E$ and $f^{n} \in Lip(\alpha, \mu)$, for $0 < \alpha \leq 1$, then, 
for $x \in [0, \infty)$,
\begin{align*}
& \left| L_{n}(f; x) - f(x) - \frac{f^{''}(x)}{2} \, \mu_{n,2}^{R^{(\beta)}} \right| \\
& \hspace{5mm} \leq \left[ e^{-\mu x} + \frac{Q(\mu, x)}{2} + \sqrt{\frac{Q(2 \mu ,x)}{4}} \right] \, \mu_{n,2}^{R^{(\beta)}} 
\cdot \omega_{1}\left( f^{n}, \sqrt{\frac{\mu_{n,4}^{R^{(\beta)}}}{\mu_{n,2}^{R^{(\beta)}}}}, \mu \right)
\end{align*}
where $\mu_{n,2}^{R^{(\beta)}} = R_{n}^{(\beta)}(e^{- \mu t} (t-x)^{2}; x)$.
\end{theorem}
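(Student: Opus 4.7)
The strategy is a weighted Voronovskaya-style argument starting from the second-order Taylor expansion of $f$ with integral remainder,
\[
f(t) = f(x) + f'(x)(t-x) + \frac{f''(x)}{2}(t-x)^{2} + \int_{x}^{t}(t-u)\bigl[f''(u) - f''(x)\bigr]\,du.
\]
Applying $L_{n}$ in the variable $t$, the linear-function preservation hypothesis gives $L_{n}(1;x)=1$ and $L_{n}(t-x;x)=0$, so the first two non-constant terms vanish under $L_{n}$. This reduces the problem to comparing $\tfrac{f''(x)}{2}L_{n}((t-x)^{2};x)$ with the stated main term $\tfrac{f''(x)}{2}\mu_{n,2}^{R^{(\beta)}}$, and controlling the $L_{n}$-image of the integral remainder. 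The swap from $L_n((t-x)^2;x)$ to $\mu_{n,2}^{R^{(\beta)}}$ is effected by the domination hypothesis, absorbing the defect into the first summand $e^{-\mu x}\,\mu_{n,2}^{R^{(\beta)}}\cdot\omega_{1}(f'',\cdot,\mu)$ of the target bound via a standard estimate $|f''(x)|\leq e^{-\mu x}\cdot(\text{weighted norm})$ bounded in turn by the modulus.

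\textbf{Controlling the remainder.} For $f''\in \mathrm{Lip}(\alpha,\mu)$ the weighted first-order modulus admits a Shisha--Mond-type inequality of the shape
\[
|f''(u) - f''(x)| \leq \Bigl(1 + \delta^{-2}(u-x)^{2}\Bigr)\,\omega_{1}(f'',\delta,\mu),
\]
where the exponential weight built into $\omega_{1}(\cdot,\cdot,\mu)$ pulls a factor $e^{-\mu t}$ through the integral. Substituting into the remainder and using $|t-u|\leq|t-x|$, the remainder splits into two pieces:
\[
\tfrac{1}{2}\,\omega_{1}(f'',\delta,\mu)\Bigl[\,L_{n}\bigl(e^{-\mu t}(t-x)^{2};x\bigr) + \delta^{-2}\,L_{n}\bigl(e^{-\mu t}(t-x)^{4};x\bigr)\Bigr].
\]
The first bracket is directly dominated by $Q(\mu,x)\,\mu_{n,2}^{R^{(\beta)}}$ via the hypothesis, contributing the $Q(\mu,x)/2$ summand in the target bound.

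\textbf{Fourth-moment piece, choice of $\delta$, and main obstacle.} For the $(t-x)^{4}$ term I would apply the Cauchy--Schwarz inequality for positive linear operators with the split $e^{-\mu t}(t-x)^{4} = [e^{-\mu t}(t-x)^{2}]\cdot(t-x)^{2}$, giving
\[
L_{n}\bigl(e^{-\mu t}(t-x)^{4};x\bigr) \leq \sqrt{L_{n}\bigl(e^{-2\mu t}(t-x)^{2};x\bigr)}\,\sqrt{L_{n}\bigl((t-x)^{2}\cdot(t-x)^{4};x\bigr)}\,
\]
so that the first factor, by the hypothesis at $2\mu$, is at most $\sqrt{Q(2\mu,x)\,R_{n}^{(\beta)}(e^{-2\mu t}(t-x)^{2};x)}$ while the second factor reduces to $\sqrt{\mu_{n,4}^{R^{(\beta)}}}$ after a parallel comparison. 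Choosing $\delta = \sqrt{\mu_{n,4}^{R^{(\beta)}}/\mu_{n,2}^{R^{(\beta)}}}$ balances the two remainder pieces and reproduces the factor $\sqrt{Q(2\mu,x)/4}\,\mu_{n,2}^{R^{(\beta)}}$ together with the prescribed argument of $\omega_{1}$. The main obstacle is the bookkeeping for the exponential weights: one must arrange the Cauchy--Schwarz split so that exactly $e^{-2\mu t}(t-x)^{2}$ appears under one square root (matching the hypothesis at $2\mu$) and a clean $\mu_{n,4}^{R^{(\beta)}}$ under the other, while verifying that the $e^{\mu x}$ introduced by the weighted Lipschitz estimate cancels the $e^{-\mu x}$ produced when $e^{-\mu t}$ is extracted from $L_n$, which hinges on the precise convention used in defining $\omega_{1}(\cdot,\delta,\mu)$ by Gupta and Tachev.
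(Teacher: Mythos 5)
The first thing to note is that the paper does not prove this statement at all: it is quoted verbatim as Theorem 5 of Gupta and Tachev \cite{GT}, introduced only with the remark that ``use could be made'' of it together with Lemma 6 and Remark 6. There is therefore no internal proof to compare yours against, and your attempt has to stand on its own. As written, it does not.

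Your template (Taylor expansion with integral remainder, linear preservation killing the first-order term, a weighted Shisha--Mond estimate on the remainder, Cauchy--Schwarz for the fourth-moment piece, and the choice $\delta=\sqrt{\mu_{n,4}^{R^{(\beta)}}/\mu_{n,2}^{R^{(\beta)}}}$) is the right skeleton for results of this type, but two steps are genuinely unjustified. First, Taylor plus $L_n(t-x;x)=0$ produces the centering term $\tfrac{1}{2}f''(x)\,L_n((t-x)^2;x)$, whereas the theorem centers at $\tfrac{1}{2}f''(x)\,\mu_{n,2}^{R^{(\beta)}}$ with $\mu_{n,2}^{R^{(\beta)}}=R_n^{(\beta)}(e^{-\mu t}(t-x)^2;x)$ --- a different operator and a weighted moment. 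The domination hypothesis is one-sided and compares only the weighted second moments, so it gives no control of the defect $\tfrac{1}{2}f''(x)\,[\,L_n((t-x)^2;x)-\mu_{n,2}^{R^{(\beta)}}\,]$ in either direction; your proposed absorption of this defect into the $e^{-\mu x}$ summand via ``$|f''(x)|\leq e^{-\mu x}\cdot(\text{weighted norm})$ bounded in turn by the modulus'' is not a valid inequality, since a pointwise bound on $|f''(x)|$ is not a bound by $\omega_{1}(f'',\delta,\mu)$ and the moment difference is not bounded by $\mu_{n,2}^{R^{(\beta)}}$. Second, your Cauchy--Schwarz split of $e^{-\mu t}(t-x)^4$ leaves the factor $\sqrt{L_n((t-x)^6;x)}$ (your ``$(t-x)^{2}\cdot(t-x)^{4}$''), an unweighted sixth moment of $L_n$ for which no hypothesis is available; the claim that it ``reduces to $\sqrt{\mu_{n,4}^{R^{(\beta)}}}$ after a parallel comparison'' has no support, because the only comparison assumed concerns the weighted second moment. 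You also explicitly defer the weight bookkeeping in $\omega_{1}(\cdot,\cdot,\mu)$ to the unspecified convention of Gupta and Tachev, which is exactly where the constants $e^{-\mu x}$ and $\sqrt{Q(2\mu,x)/4}$ must come from, so the proof is incomplete by its own admission. If the goal is to match the paper, the honest route is simply to cite \cite{GT}; if the goal is to prove the theorem, the two gaps above must be closed.
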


\vspace{5mm}
\section{Appendix}
Expansion of the function $f(a e^{t})$ in powers of $t$ is is given by
\begin{align}\label{a1}
f(a e^{t}) = \sum_{k=0}^{\infty} \left[ D^{k}_{t} \, f(a e^{t}) \right]_{t=0} \, \frac{t^{k}}{k!} = f(a) + 
\sum_{k=1}^{\infty} p_{k}(a) \, \frac{t^{k}}{k!},
\end{align}
where 
\begin{align}\label{a2}
p_{n}(a) = \left[ D^{n}_{t} \, f(a e^{t}) \right]_{t=0} = \sum_{r=1}^{n} S(n, n-r) \, a^{r} \, f^{(r)}(a),
\end{align}
with $S(n,m)$ being the Stirling numbers of the second kind. Applying this expansion to the Lambert W-function
the formula $W(x e^{x}) = x$ and the $n^{th}$-derivative coefficients, Oeis A042977, \cite{Oeis1, LWM} are 
required to obtain
\begin{align}\label{a3}
- \frac{1}{\beta} \, W(- \beta \, e^{-\beta + t}) &= 1 + (1-\beta) \, \sum_{n=1}^{\infty} \frac{B_{n-1}(\beta) \, u^{n}}{n!},
\end{align}
where $(1-\beta)^{2} \, u = t$ and $B_{n}(x)$ are the Eulerian polynomials of the second kind. Let $z(t)$ be the 
left-hand side of \eqref{a3}, $- \beta \, z(t) = W(- \beta \, e^{-\beta + t})$, to obtain
\begin{align}\label{a4}
\frac{t}{(1-\beta) \, (z(t)-1)} &= 1 - \frac{u}{2!} + 2\, (1-4\beta) \, \frac{u^{2}}{4!} - 6 \beta^{2} \, \frac{u^{3}}
{4!} - (1 - 8\beta + 88\beta^{2} + 144\beta^{3}) \, \frac{u^{4}}{6!} \nonumber\\
& \hspace{5mm} - 840 \beta^{2}\,(1 + 12\beta + 8\beta^{2}) \, \frac{u^{5}}{8!} + O(u^{6}).
\end{align}
The ratio of $z(x)-1$ to $z(t)-1$ is given by
\begin{align}\label{a5}
\frac{t}{x} \, \frac{z(x)-1}{z(t)-1} &= 1 + \frac{(x-t)}{2! \, (1-\beta)^{2}} + \delta_{1} \, \frac{(x-t)}{4! \, 
(1-\beta)^{4}} +  \delta_{2} \, \frac{(x-t)}{4! \, (1-\beta)^{6}} 
+ \mathcal{O}\left(\frac{(x-t)}{8! \, (1-\beta)^{8}} \right),
\end{align}
where
\begin{align*}
\delta_{1} &= 4(1+2 \beta) x - 2(1-4\beta) \, t \\
\delta_{2} &= (1 + 8\beta + 6\beta^{2}) \, x^{2} - (1-4\beta - 6\beta^{2}) \, x t + 6 \beta^{2} \, t^{2} \\
\end{align*}


\vspace{5mm}

\end{document}